\def\f#1#2{\frac{#1}{#2}}
\def\pa{\partial}
\def\n{\nabla}
\def\a{\alpha}
\def\b{\beta}
\def\ga{\gamma}
\def\({\left (}
\def\){\right )}
\def\<{\langle}
\def\>{\rangle}
\def\A{\mathring{A}}
\newcommand{\bel}[1]{\begin{equation}\label{#1}}
\newcommand{\be}{\begin{equation}}
  \newcommand{\beq}{\begin{equation}}
\newcommand{\ba}{\begin{eqnarray}}
\newcommand{\ea}{\end{eqnarray}}
\newcommand{\qe}{\end{equation}}
\newcommand{\eeq}{\end{equation}}
\newtheorem{thm}{Theorem}[section]
\newtheorem{lem}[thm]{Lemma}
\newtheorem{prop}[thm]{Proposition}
\newtheorem{rem}[thm]{Remark}
\newtheorem{claim}{Claim}[section]
\newtheorem*{acknowledgement*}{Acknowledgement}
\newcommand{\abs}[1]{\left\vert#1\right\vert}
\newcommand{\set}[1]{\left\{#1\right\}}
\newcommand{\eps}{\varepsilon}
\newcommand{\half}{\frac{1}{2}}
\title[stability of the curvature flow]{Stability of the area preserving mean curvature flow in Asymptotic Schwarzschild space}
\keywords{area preserving mean curvature flow; stability; long time existence, exponential convergence; asymptotic schwarzschild space}
\author{Yaoting Gui, Yuqiao Li, Jun Sun}
\address{Yaoting Gui, Beijing International Mathematical Research Center, Peking University}
\address{Yuqiao Li, Department of Mathematics, Hefei University of Technology, Hefei, 230009, P.R.China}
\address{Jun Sun, School of Mathematics and Statistics, Wuhan University, Wuhan, 430072, P. R. of China}
\email{ytgui@bicmr.pku.edu.cn, lyq112@mail.ustc.edu.cn, sunjun@whu.edu.cn}
\thanks {2020 Mathematics Subject Classification. 51F99,31E05}
\thanks{The third author is supported by NSFC No. 12071352, 12271039  }
\begin{document}

\numberwithin{equation}{section}

\begin{abstract}
     We first demonstrate that the area preserving mean curvature flow of hypersurfaces in space forms exists for all time and converges exponentially fast to a round sphere if the integral of the traceless second fundamental form is sufficiently small. Then we show that from sufficiently large initial coordinate sphere, the area preserving mean curvature flow exists for all time and converges exponentially fast to a constant mean curvature surface in 3-dimensional asymptotically Schwarzschild spaces. This provides a new approach to the existence of foliation established by Huisken and Yau (\cite{huisken1996definition}). And also a uniqueness result follows.
\end{abstract}

\maketitle

\section{Introduction}

\allowdisplaybreaks

Let $M_0$ be a smooth compact hypersurface of $\mathbb{R}^{n+1}$ locally represented by an embedding $F_0:U\rightarrow\mathbb{R}^{n+1}$ for $U\subset\mathbb{R}^{n}$ and $F_0(U)\subset M_0$. The volume preserving mean curvature flow was considered by Gage for $n=1$ (\cite{Gage-1986}) and Huisken for $n\geq 2$ (\cite{Huisken-VPMCF}), which is a family of maps $F_t=F(\cdot, t)$ evolving by
\begin{equation}\label{flow-VPMCF}
\begin{cases}   
    \frac{\pa F}{\pa t}(x, t)&=[h(t)-H(x,t)]\nu(x,t),\\
    F(\cdot,0)&=F_0,
\end{cases}
\end{equation}
where $H$ is the mean curvature of $M_t=F_t(U),\ \nu$ is the outward unit normal to $M_t$, and
\begin{equation}\label{h}
h(t)=\frac{\int_{M_t}Hd\mu_t}{\int_{M_t}d\mu_t}
\end{equation}
is the average of the mean curvature.

This flow (\ref{flow-VPMCF})  preserves the volume enclosed by $M_t$ while decreases the area of $M_t$, even when the ambient manifold is a general Riemannian manifold. Huisken  (\cite{Huisken-VPMCF}) demonstrated that if the initial hypersurface $M_0$ is uniformly convex in ${\mathbb R}^{n+1}$ for $n\geq 2$, then the flow (\ref{flow-VPMCF}) exists globally and converges exponentially to a round sphere. This result also provides an alternative proof of the isoperimetric inequality for uniformly convex domains in Euclidean space. Intuitively, if the flow (\ref{flow-VPMCF}) exists globally and converges to some limit hypersurface at infinity, then the limit must be a hypersurface of constant mean curvature. Therefore, the flow (\ref{flow-VPMCF}) can also be used to investigate the existence of constant mean curvature hypersurfaces.

One key challenge in studying the flow (\ref{flow-VPMCF}) is its non-local nature, as $h(t)$ is a global term. This means the maximum principle for parabolic equations is somewhat subtle to apply. Consequently, some elegant properties of the mean curvature flow (\cite{Huisken-InventMath-1986}) are lost. For instance, in general, the embedding property may not be maintained along the flow (\ref{flow-VPMCF}). Additionally, the convexity of the initial hypersurface may not be preserved during the evolution in a general ambient manifold even in space form, as was pointed out by Huisken (\cite{Huisken-VPMCF}). Nevertheless, several extensions of Huisken’s results on the volume-preserving mean curvature flow (\ref{flow-VPMCF}) still hold.

In their seminal paper (\cite{huisken1996definition}), Huisken and Yau investigate the flow (\ref{flow-VPMCF}) in an asymptotically Schwarzschild 3-space. They demonstrated that the flow originating from a sufficiently large coordinate sphere exists globally and converges exponentially to a surface of constant mean curvature. This framework allowed them to establish the existence of constant mean curvature surface foliation and to define the center of mass

Further stability results for the flow (\ref{flow-VPMCF}) have been obtained without the assumption of initial hypersurface convexity. Based on center manifold analysis, Escher and Simonett (\cite{Escher-Simonett-1998}) showed that if the initial hypersurface $M_0$ is $C^{1,\b}$-close to a fixed Euclidean sphere yet non-convex, the flow converges exponentially fast to a round sphere. For a general
ambient manifold, Alikakos and Freire (\cite{AlikakosFreire-JDG-2003}) proved long time existence and
convergence to a constant mean curvature surface, provided that the initial hypersurface is $C^{2,\a}$-close to a sufficiently small geodesic sphere and meets certain non-degeneracy conditions. Li (\cite{Li2009TheVM}) applied iteration techniques from Ricci flow to show the long-term existence and exponential convergence of the flow (\ref{flow-VPMCF}) in Euclidean space under the assumption that the $L^2$-norm of the traceless second fundamental form of the initial hypersurface is sufficiently small. These results were later generalized to different cases where the ambient manifold are space forms(\cite{XuHW2014spaceform}, \cite{Huang-Lin-Zhang}, \cite{LXZ-IJM-2024}).

Another generalization is the higher order mixed volume preserving curvature flow as considered by McCoy in \cite{McCoy2004TheMV}, which is a family of maps $F_t=F(\cdot, t)$ evolving by
\begin{equation}\label{flow}
\begin{cases}   
    \frac{\pa F}{\pa t}(x, t)&=[h_k(t)-H(x,t)]\nu(x,t),\\
    F(\cdot,0)&=F_0,
\end{cases}
\end{equation}
where for each fixed $k=-1,0,1,\cdots,n-1$,
\begin{equation}\label{hk}
h_k(t)=\frac{\int_{M_t}HE_{k+1}d\mu_t}{\int_{M_t}E_{k+1}d\mu_t}.
\end{equation}
For any $l=0,\cdots,n$, $E_l$ is the l-th
elementary symmetric function of the principal curvatures $\kappa_1,\cdots,\kappa_n$ of $M_t$,
$$
E_l=
\begin{cases}
1,&l=0,\\
\sum_{1\leq i_1<...<i_l\leq n}\kappa_{i_1}\kappa_{i_2}\cdots\kappa_{i_l},&l=1,\dots,n.
\end{cases}
$$
For each fixed $k$, the global function $h_k(t)$ in \eqref{hk} ensures that the mixed volume 
\begin{equation}\label{vol}
V_{n-k}(\Phi_t)=
\begin{cases}
\operatorname{Vol}\left(\Phi_t\right),&k=-1,\\\left\{\left(n+1\right)\binom{n}{k}\right\}^{-1}\int_{M_t}E_kd\mu_t,&k=0,1,\dots,n-1,
\end{cases}
\end{equation}
is preserved under the flow \eqref{flow}. Here $\Phi_t$ is the $(n+1)-$dimensional region enclosed by $M_t$ with $\partial\Phi_t=M_t$. 

McCoy has shown that the flow \eqref{flow} has long time solution and converges to a sphere for any $k=-1, 0,1,\cdots, n-1,$ provided that the initial hypersurface $M_0$ is strictly convex \cite{McCoy2004TheMV}. Notably, the flow \eqref{flow} reduces to the flow \eqref{flow-VPMCF} when $k=-1$ indicating that McCoy's result serves as a higher order generalization of Huisken's result. Subsequently, McCoy examined more mixed volume preserving curvature flows (\cite{McCoy-CVPDE2005}).

\vspace{.1in}

In this paper, we will address the flow \eqref{flow} with $k=0$. Set
\[
h_0=\f{\int_{M_t}H^2d\mu_t}{\int_{M_t}Hd\mu_t},
\]
and the flow becomes
\begin{equation}\label{flow0}
\begin{cases}   
    \frac{\pa F}{\pa t}(x, t)&=[h_0(t)-H(x,t)]\nu(x,t),\\
    F(\cdot,0)&=F_0.
\end{cases}
\end{equation}
Due to the natural choice of the global term $h_0$, the flow (\ref{flow0}) preserves the area of $M_t$ while increases the volume of the region enclosed by $M_t$ (see Section 2).  We refer to this flow (\ref{flow0}) as \emph{area preserving mean curvature flow}.

\vspace{.1in}

In the first part of the paper, we establish the stability result of the area preserving mean curvature flow (\ref{flow0}) under the analogous conditions presented in \cite{Li2009TheVM}. Our first theorem is as following:

\begin{thm}\label{thm1.1}
   Let $M\subset{\mathbb R}^{n+1}$ be a compact, orientable, smoothly immersed hypersurface of dimension $n\geq2$ satisfying 
    \begin{equation}\label{init}
        \operatorname{Vol}(M)\leq v,\quad \abs{A}\leq\Lambda,\quad H\geq\ga, \quad\int_{M}|\A|^2\leq\eps
    \end{equation}
   for any positive constant $v, \Lambda, \ga$ and sufficiently small $\eps=\eps(v, \Lambda, \ga)>0$, where Vol is the volume of $M$ with respect to the induced metric and $\A$ is the traceless second fundamental form of $M$.
   Then the flow \eqref{flow0} with initial hypersurface $M$ will stay for all the time and converge exponentially fast to a round sphere.   
\end{thm}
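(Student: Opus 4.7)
My plan is to adapt the integral-estimate method of Li \cite{Li2009TheVM} from the volume-preserving flow to the area-preserving flow \eqref{flow0}. First I would record the standard evolution equations along \eqref{flow0}: for $g_{ij}$, $d\mu_t$, $\nu$, $H$, $|A|^2$, $|\A|^2$, and the global scalar $h_0(t)$. A direct computation (carried out in Section~2 of the paper) confirms that $|M_t|$ is preserved while the enclosed volume is non-decreasing, the latter by Cauchy--Schwarz in the form $(\int_{M_t}H)^2\leq|M_t|\int_{M_t}H^2$. Short-time existence is standard since \eqref{flow0} is a quasilinear parabolic equation with a bounded non-local perturbation, provided $\int_{M_t}H>0$.

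The central step is to show that
\[
E(t):=\int_{M_t}|\A|^2\,d\mu_t
\]
remains small for all time the flow exists. Differentiating $E(t)$, using the Simons-type identity for $\Delta|\A|^2$ in $\mathbb{R}^{n+1}$, and integrating by parts yield a differential inequality of the schematic form
\[
\frac{d}{dt}E(t)+2\int_{M_t}|\nabla\A|^2\,d\mu_t\leq C\int_{M_t}|A|^2|\A|^2\,d\mu_t+\mathcal{R}(t),
\]
where $\mathcal{R}(t)$ collects the non-local terms generated by $h_0-H$ in the evolution of $|\A|^2$ and $d\mu_t$. The cubic term is absorbed using the Michael--Simon Sobolev inequality together with the smallness $\int_M|\A|^2\leq\eps$ via the standard $\eps$-absorption trick. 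The non-local remainder is handled by establishing the a priori bound $|h_0(t)|\leq C(v,\Lambda,\gamma)$ and estimating $(h_0-H)$-integrals against $E(t)$ by Cauchy--Schwarz. A Gronwall argument then gives $E(t)\leq C\eps$ for as long as the flow exists, together with an $L^1_t$ bound on $\int_{M_s}|\nabla\A|^2\,d\mu_s$.

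Next I would bootstrap this $L^2$-smallness of $\A$ to full $C^\infty$-smallness uniform in $t$. By iteratively controlling $\int|\nabla^m A|^2$ via Kato-type identities, combined with Michael--Simon--Sobolev and Moser iteration, one obtains $\sup_{M_t}|\nabla^m A|\leq C_m$ uniformly in $t$, and in particular $\sup|\A|\leq C\eps^{\theta}$ for some $\theta>0$ depending on $n$. This bootstrap is the main technical obstacle of the whole argument: the non-local scalar $h_0(t)$ prevents a direct maximum-principle argument, so every estimate must be pushed through integrals. In parallel one must propagate the positivity $H\geq\gamma/2$ along the flow, which follows once $|\A|$ is pointwise small, since area preservation together with $\sup|\A|\ll 1$ pins $H$ near the mean-curvature value of the target sphere and keeps $\int_{M_t}H>0$, so $h_0(t)$ stays well-defined and uniformly bounded.

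With these uniform $C^k$ estimates, long-time existence follows from the standard continuation principle. For exponential convergence I would establish a Poincar\'e-type inequality valid on nearly-spherical hypersurfaces,
\[
\int_{M_t}(H-h_0)^2\,d\mu_t\geq\lambda\int_{M_t}|\A|^2\,d\mu_t,
\]
for some $\lambda=\lambda(n,v,\gamma)>0$, and combine it with the dissipation identity for $\int(H-h_0)^2\,d\mu_t$ (which plays here the role of the area dissipation in Huisken's volume-preserving setting) to deduce $E(t)\leq C\eps\,e^{-\lambda t}$. Interpolation with the uniform higher-order bounds upgrades this $L^2$ decay to pointwise exponential decay of $|\A|$ and all its derivatives, whence $M_t$ converges exponentially fast in $C^\infty$ to a round sphere of area $|M_0|$.
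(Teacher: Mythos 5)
Your outline has the right overall shape (short-time estimates, propagation of smallness, bootstrap, continuation, exponential decay), but two of its load-bearing steps have genuine gaps. First, the Gronwall argument for $E(t)=\int_{M_t}|\A|^2\,d\mu_t$ only yields $E(t)\leq e^{C\Lambda^2t}\eps$, which is useless beyond a fixed short time $T=T(\Lambda,\gamma,n)$; absorbing $\int|A|^2|\A|^2$ via Michael--Simon--Sobolev does not manufacture a sign-definite decay term, because the leading contribution $\tfrac{2}{n}H^2|\A|^2$ is not small. The decay mechanism in the paper is pointwise, not integral: in the evolution of $|\A|^2$ the good term $-\tfrac{4}{n}h_0H|\A|^2$ dominates $2|A|^2|\A|^2=2|\A|^4+\tfrac{2}{n}H^2|\A|^2$ once $|\A|$ and $|H-h_0|$ are pointwise small, leaving a net $-\tfrac{1}{n}H^2|\A|^2\leq-\tfrac{\gamma^2}{n}|\A|^2$ and hence exponential decay of $\max_{M_t}|\A|$ by the maximum principle (Lemma \ref{lem1.3}). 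The pointwise smallness of $|\A|$ and $|H-h_0|$ on a short initial interval is obtained essentially as you suggest (integral bound plus Moser iteration, then gradient estimates and $|H-h_0|\leq \mathrm{diam}(M_t)\max|\nabla H|$), and the whole package is then iterated over consecutive time intervals of uniform length.

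Second, and this is precisely the new difficulty of the area-preserving flow that the paper singles out, your assertion that area preservation together with $\sup|\A|\ll1$ ``pins $H$ near the value of the target sphere'' is not a proof. For the flow \eqref{flow0} one only has $\int_{M_t}(h_0-H)\,d\mu_t\geq0$ rather than $=0$, so the lower bound on $h_0$ (hence on $H$, hence the positivity of the decay rate above) does not come for free as it does for the volume-preserving flow. The paper's Lemma \ref{lem1.6} resolves this by introducing $h(t)=\int_{M_t}H\,d\mu_t/|M_t|\leq h_0(t)$ (Cauchy--Schwarz) as a proxy, deriving ODE bounds for $h$ from below and for $h_0$ from above, and only then concluding two-sided bounds on $H$; without some substitute for this step your continuation argument cannot close. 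Finally, your exponential-convergence mechanism via a Poincar\'e-type inequality $\int(H-h_0)^2\geq\lambda\int|\A|^2$ is both unproven for general $n\geq2$ (the De~Lellis--M\"uller-type estimate you would need is delicate outside $n=2$) and unnecessary: once the pointwise exponential decay of $|\A|$ is in hand, $|H-h_0|$ decays exponentially by Lemma \ref{lem1.4}, and convergence to a round sphere follows by interpolation.
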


Arguing in a similar way as in \cite{XuHW2014spaceform} and \cite{Huang-Lin-Zhang} for the volume preserving mean curvature flow (\ref{flow-VPMCF}),  Theorem \ref{thm1.1} can be readily generalized to the cases where the ambient manifold is either sphere ${\mathbb S}^{n+1}$ or hyperbolic space $\mathbb{H}^{n+1}$.

\begin{thm}\label{thm1.2}
   Let $M\subset\mathbb{S}^{n+1}$ be a compact, orientable, smoothly immersed hypersurface of dimension $n\geq2$ satisfying initial condition \eqref{init}.
   Then the flow \eqref{flow0} with initial hypersurface $M$ will stay for all the time and converge exponentially fast to a round sphere.   
\end{thm}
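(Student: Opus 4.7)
The plan is to repeat the proof of Theorem \ref{thm1.1} with the ambient space $\mathbb{R}^{n+1}$ replaced by $\mathbb{S}^{n+1}$, following the template used in \cite{XuHW2014spaceform} and \cite{Huang-Lin-Zhang} to adapt Li's Euclidean stability proof for the volume preserving mean curvature flow to space forms. The outline remains the same: short-time existence from parabolic theory; propagation of the smallness $\int_{M_t}|\A|^2 d\mu_t\leq\eps$ along the flow; upgrade of this $L^2$ control to a uniform pointwise bound on $|\A|$ via a Michael--Simon type Sobolev inequality and Moser iteration; bootstrapping to higher derivative bounds and long-time existence; and finally exponential decay of $|\A|$ forcing convergence to a totally umbilic surface, i.e.\ a geodesic sphere in $\mathbb{S}^{n+1}$.

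The modifications from the Euclidean case are concentrated in the Codazzi, Gauss, and Simons identities, where the constant ambient sectional curvature of $\mathbb{S}^{n+1}$ contributes additional terms. Tracing these through the evolution of $|\A|^2$ under the flow \eqref{flow0}, one obtains new reaction terms whose coefficients depend only on the dimension $n$, while the coefficient of the diffusion $-|\nabla A|^2$ is unchanged. Differentiating $\int_{M_t}|\A|^2 d\mu_t$ one then arrives at an inequality of the schematic form
$$\frac{d}{dt}\int_{M_t}|\A|^2 d\mu_t \leq -c\int_{M_t}|\nabla A|^2 d\mu_t + C\int_{M_t}|\A|^4 d\mu_t + C'\int_{M_t}|\A|^2 d\mu_t,$$
with constants depending only on $n, v, \Lambda, \ga$. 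The Michael--Simon inequality on hypersurfaces of $\mathbb{S}^{n+1}$ (with constants depending only on $n$) lets one absorb the $\int|\A|^4$ term using the initial smallness $\eps$, while the $\int|\A|^2$ term on the right is controlled once $H$ is close to its average. A Moser iteration then promotes this to a uniform $L^\infty$-bound on $|\A|$, and the higher order estimates follow by standard parabolic bootstrapping together with Krylov--Safonov estimates, yielding long-time existence. Exponential convergence to a geodesic sphere proceeds exactly as in the Euclidean case, since $|\A|\to 0$ uniformly and $H$ tends to a positive constant.

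The main technical obstacle is to verify that the extra ambient curvature contributions do not spoil the delicate smallness propagation: one needs the allowed $\eps=\eps(v,\Lambda,\ga)$ to remain uniform after including these terms, and one must also propagate $H\geq\ga/2$ so that the global term $h_0(t)=\int H^2 d\mu_t/\int H d\mu_t$ stays well-defined and uniformly bounded. Both reduce to checking that the new curvature terms have the expected structure and can be absorbed by the diffusion or by the smallness of $|\A|$. Since $\mathbb{S}^{n+1}$ has constant sectional curvature, all extra coefficients are explicit universal constants and the estimates close in precisely the same way as in \cite{XuHW2014spaceform} and \cite{Huang-Lin-Zhang}; the only difference with Theorem \ref{thm1.1} is the explicit tracking of the constant-curvature contributions at each step of the integral and pointwise estimates.
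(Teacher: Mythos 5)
Your proposal follows essentially the same route as the paper, which itself only remarks that Theorem \ref{thm1.2} follows by re-proving Proposition \ref{prop1.2} and Lemmas \ref{lem1.3}--\ref{lem1.6} in $\mathbb{S}^{n+1}$ using the evolution equations of Lemma \ref{evo} (already stated for general curvature $c$) together with the Sobolev inequality and diameter estimate for hypersurfaces in space forms as in \cite{XuHW2014spaceform}. Your tracking of the extra constant-curvature reaction terms and the absorption argument is exactly the adaptation the paper has in mind, so the approaches coincide.
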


\begin{thm}\label{thm1.3}
   Let $M\subset\mathbb{H}^{n+1}$ be a compact, orientable, smoothly immersed hypersurface of dimension $n\geq2$ satisfying 
    \begin{equation*}
        \operatorname{Vol}(M)\leq v,\quad \abs{A}\leq\Lambda,\quad H\geq\ga+n, \quad\int_{M}|\A|^2\leq\eps.
    \end{equation*}
   Then the flow \eqref{flow0} with initial hypersurface $M$ will stay for all the time and converge exponentially fast to a round sphere.   
\end{thm}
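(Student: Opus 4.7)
The plan is to parallel the proof of Theorem \ref{thm1.1}, adapting it to the hyperbolic ambient along the lines of \cite{Huang-Lin-Zhang} and \cite{XuHW2014spaceform} for the volume preserving mean curvature flow. The hypothesis $H\geq\ga+n$ rather than $H\geq\ga$ reflects the fact that every geodesic sphere of radius $r$ in $\mathbb{H}^{n+1}$ has constant mean curvature $n\coth r>n$, so the natural positive quantity in hyperbolic space is $\widetilde H:=H-n$. Setting $\widetilde h_0:=h_0-n$, the flow takes the familiar form $\pa_tF=(\widetilde h_0-\widetilde H)\nu$ with $\widetilde H\geq\ga$, restoring a Euclidean-type setup and allowing us to reuse essentially the entire structure of the proof of Theorem \ref{thm1.1}.

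The first step would be to record the evolution equations under \eqref{flow0} in $\mathbb{H}^{n+1}$ for $g$, $\nu$, $H$, $|A|^2$ and $|\A|^2$, noting that the Gauss equation and Simons-type identity pick up additional lower-order terms proportional to $|\A|^2$, $H^2$ and the ambient sectional curvature $-1$. These extra terms are uniformly bounded once $|A|\leq\Lambda$ is under control, and can therefore be absorbed into the multiplicative constants that appear in the Euclidean argument. Combining this with the Hoffman--Spruck version of the Michael--Simon Sobolev inequality valid on hypersurfaces of $\mathbb{H}^{n+1}$, one obtains a differential inequality of the form
\[
\f{d}{dt}\int_{M_t}|\A|^2d\mu_t\leq C(v,\Lambda,\ga)\int_{M_t}|\A|^2d\mu_t-c\int_{M_t}|\nabla\A|^2d\mu_t,
\]
and a Moser iteration then promotes the smallness of $\int|\A|^2$ to an $L^\infty$ bound $\|\A\|_\infty\leq C\eps^{\sigma}$ for some $\sigma>0$, in complete parallel with the Euclidean case.

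Standard parabolic bootstrap on $\nabla^kA$ next yields uniform $C^\infty$ bounds on the second fundamental form and hence long-time existence. For convergence, once $|\A|$ is uniformly small one shows that the bound $\widetilde H\geq\ga/2$ persists, providing a spectral gap, and a direct computation gives $\f{d}{dt}\int|\A|^2\leq-\delta\int|\A|^2$, i.e.\ exponential $L^2$ decay of $\A$. Interpolation with the higher $C^k$ bounds upgrades this to exponential decay in every $C^k$ norm; combined with area preservation, this forces the surfaces to converge smoothly to a totally umbilical limit, namely a geodesic sphere in $\mathbb{H}^{n+1}$.

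The main obstacle I anticipate is propagating the lower bound $\widetilde H\geq\ga$ along the flow, since $h_0-H$ can have either sign and $H$ could in principle drift below $n+\ga$, destroying the spectral-gap argument. As in \cite{Huang-Lin-Zhang}, this is handled by monitoring the pinching quantity $|\A|^2/\widetilde H^2$, whose differential inequality, together with the smallness of $\eps$ and the curvature contributions in the evolution of $\widetilde H$, pushes $\widetilde H$ to stay bounded away from $0$. This is precisely the step where the stronger hypothesis $H\geq\ga+n$ (rather than $H\geq\ga$ as in the Euclidean case) is essential.
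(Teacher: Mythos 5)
Your overall architecture (evolution equations with the extra curvature terms, Sobolev/Moser iteration to upgrade $\int_M|\A|^2\leq\eps$ to a pointwise bound, exponential decay of $\A$, interpolation, convergence to a totally umbilical limit) matches the paper, which indeed proves Theorem \ref{thm1.3} by transplanting Proposition \ref{prop1.2} and Lemmas \ref{lem1.3}, \ref{lem1.4}, \ref{lem1.6} to $\mathbb{H}^{n+1}$ and rerunning the iteration of Theorem \ref{thmr}. The role you assign to $H\geq\ga+n$ is also the right one: in the evolution of $|\mathring A|^2$ the ambient curvature contributes $-2nc|\A|^2=+2n|\A|^2$, and the good term $-\frac{2}{n}H^2|\A|^2$ beats it precisely because $H^2-n^2\geq 2n\ga+\ga^2>0$.

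However, there is a genuine gap at the step you yourself flag as the main obstacle. You propose to propagate the lower bound on $\widetilde H=H-n$ by monitoring the pinching quantity $|\A|^2/\widetilde H^2$. This is not what the paper does, and it is problematic for two reasons. First, \eqref{flow0} is non-local: applying the maximum principle to $|\A|^2/\widetilde H^2$ produces terms carrying the factor $h_0$ with no definite sign (this is exactly why Huisken's convexity/pinching preservation for the volume preserving flow already fails in general ambient spaces), so the pinching ratio does not close without independent control of $h_0$. Second, even if it did, it would give you neither the upper bound on $H$ (hence on $|A|$) needed to restart the short-time estimates in the iteration, nor the two-sided bounds on $h_0$. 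The paper's introduction identifies this as \emph{the} new difficulty of the area preserving flow compared with \eqref{flow-VPMCF}: since $\int_{M_t}(h_0-H)\,d\mu_t$ is only nonnegative rather than zero, $h_0$ is not the average of $H$, and one cannot read off its bounds from $H$ directly. The paper's Lemma \ref{lem1.6} resolves this by first deriving a two-sided bound for $h(t)=\fint_{M_t}H$ from its evolution ODE (using that $|M_t|$ is constant), then using $h_0\geq h$ for the lower bound and a separate ODE for $h_0$ for the upper bound, and finally recovering $C_1-\eps^{1/2}\leq H\leq C_2+\eps^{1/2}$ from $|H-h_0|$ small. Your proposal contains no substitute for this lemma, so the lower bound $\widetilde H\geq\ga/2$ and the upper bound on $|A|$ are not actually propagated, and the iteration does not close as written. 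To repair it, replace the pinching-ratio step by the hyperbolic analogue of Lemma \ref{lem1.6} (the computations carry over since $\partial_t|M_t|=0$ still holds and the extra curvature terms in $\partial_t\int H$ and $\partial_t\int H^2$ are controlled by $\Lambda$).
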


\begin{rem}
  Note that the initial condition is the same as the Euclidean space when the ambient space is the sphere, while on the hyperbolic space, we add the condition $H\geq\ga+n$, which is based on the fact that any constant mean curvature hypersurface in the hyperbolic space has mean curvature greater than $n$.  
\end{rem}

In the second part of this paper, we will explore the flow (\ref{flow0}) in a 3-dimensional asymptotically Schwarzschild space. Similar to the findings of Huisken-Yau in \cite{huisken1996definition}, we show that the flow \eqref{flow0} starting from a coordinate sphere with sufficiently large radius exists globally and converges exponentially to a surface of constant mean curvature.

\begin{thm}\label{thm-HY}
    Let $N$ be a 3-dimensional asymptotically Schwarzschild space with metric
    \[ \bar{g}_{\alpha\beta}=\left(1+\frac{m}{2r}\right)^4\delta_{\alpha\beta}+P_{\alpha\beta}, \]
    where
    \[ |P_{\alpha\beta}|\leq C_1r^{-2},\quad|\partial^lP_{\alpha\beta}|\leq C_{l+1}r^{-l-2},\quad 1\leq l\leq 4, \]
    and $m>0$ is a constant. Set $C_0=\max(1, m, C_1, C_2, C_3, C_4, C_5)$. Let $M_0\subset N$ be the coordinate sphere of large radius $\sigma>0$. Then there is a $\sigma_0$ depending only on $C_0$ such that for all $\sigma\geq\sigma_0$, the flow \eqref{flow0} starting from $M_0$ exists for all time and converges exponentially fast to a constant mean curvature surface. Moreover, this surface is unique in the circular domain $\mathcal{B}_{\sigma}(B_1, B_2, B_3)$.
\end{thm}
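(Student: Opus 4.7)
The plan is to adapt the Huisken-Yau scheme of~\cite{huisken1996definition} for the volume-preserving flow to our area-preserving setting. I would begin by deriving the evolution equations along \eqref{flow0}: from $\tfrac{d}{dt}d\mu_t=(h_0-H)H\,d\mu_t$ and the definition of $h_0$ one sees immediately that $\int_{M_t}d\mu_t$ is preserved, and one computes the evolutions of $g_{ij}$, $H$, $|A|^2$, $|\mathring A|^2$ including the extra terms arising from the ambient Ricci and Riemann tensors of $N$. A preliminary step is to show that on any surface close to a coordinate sphere of radius $\sigma$ the denominator $\int_{M_t}H\,d\mu_t$ is of order $|M_t|/\sigma$, so that $h_0=2/\sigma+O(\sigma^{-2})$ and is well defined; this follows from the asymptotic expansion of the mean curvature of coordinate spheres in the Schwarzschild background.

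Since the area is preserved, there is a well-defined reference radius $r_0=\sqrt{|M_0|/(4\pi)}$, and I would describe $M_t$ as a radial graph $u(\cdot,t)$ over $S_{r_0}$ about an appropriate center. The core a priori estimate is a curvature bootstrap: assuming the continuation hypotheses
\begin{equation*}
\tfrac{1}{2}\sigma^{-1}\le H\le 2\sigma^{-1},\qquad |\mathring A|\le \sigma^{-2+\eta},\qquad \sup_{M_t}|r-\sigma|\le \sigma^{1/2},
\end{equation*}
I would prove strictly better bounds. The heart of the matter is an $L^2$ decay estimate
\begin{equation*}
\frac{d}{dt}\int_{M_t}|\mathring A|^2 d\mu_t \le -\delta\sigma^{-2}\int_{M_t}|\mathring A|^2 d\mu_t + C\sigma^{-5},
\end{equation*}
obtained by combining the evolution of $|\mathring A|^2$ with a Simons identity, integrating by parts, and carefully absorbing the ambient curvature error terms $\bar R_{\mu\alpha\nu\beta}$ and those coming from $P_{\alpha\beta}$ via the decay bounds in the hypothesis. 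The non-local term contributes a favorable sign once one writes $|A|^2=\tfrac{1}{2}H^2+|\mathring A|^2$ and invokes the definition of $h_0$.

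Next I would upgrade this $L^2$ decay to pointwise control via the Michael-Simon Sobolev inequality and Stampacchia iteration applied to $|\mathring A|^2 H^{-2}$, following~\cite{huisken1996definition}, obtaining $|\mathring A|\le C\sigma^{-2}e^{-\delta t}$ together with exponential decay of $|h_0-H|$. Hamilton-type interpolation then yields exponential decay of $|\nabla^k A|$ for every $k$. These bounds preclude singularity formation and, by a standard continuation argument, produce a global smooth solution. The Euclidean position of $M_t$ must also be controlled, since area conservation alone pins down the radius but not the location: one writes the evolution of the barycenter of $M_t$ in terms of $(h_0-H)\nu$, and the exponential smallness of $h_0-H$ keeps $M_t$ in a bounded region, yielding exponential convergence to a smooth CMC limit $M_\infty$. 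Uniqueness in the circular domain $\mathcal{B}_{\sigma}(B_1,B_2,B_3)$ then follows in the Huisken-Yau manner: a competing CMC surface in this region would serve as initial data for \eqref{flow0}, and comparing the resulting limit with $M_\infty$ forces them to coincide.

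The main technical obstacle is the interplay between the non-local coefficient $h_0$ and the Schwarzschild correction. Unlike in space forms, the mass term in $\bar g_{\alpha\beta}$ contributes $O(m\sigma^{-2})$ errors to the mean curvature and $\bar R_{\mu\alpha\nu\beta}$ enters Simons' identity with an a priori unfavorable sign; one has to trace a delicate algebraic cancellation between $h_0 H$ and $|A|^2$ to extract the negative coefficient $-\delta\sigma^{-2}$ in the $L^2$ estimate above. A secondary, but still subtle, difficulty is the position control alluded to: because the flow only preserves area, one must carefully monitor the Euclidean center of mass against the exponentially small quantity $h_0-H$ in order to rule out drift to infinity and to pin the CMC limit inside $\mathcal{B}_{\sigma}(B_1,B_2,B_3)$.
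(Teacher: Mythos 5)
Your overall skeleton (the Huisken--Yau scheme) matches the paper's, but two of your key steps have genuine gaps precisely at the points where the area-preserving flow differs from the volume-preserving one.

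First, your position control is circular. You propose to rule out drift by "monitoring the Euclidean center of mass against the exponentially small quantity $h_0-H$", but exponential smallness of $h_0-H$ is only available \emph{after} long-time existence and the a priori confinement to $\mathcal{B}_\sigma(B_1,B_2,B_3)$ have been established; it cannot be an input to that confinement. The paper's Proposition 4.5 instead argues at the first time and point where $\max_{M_t}r$ reaches $\sigma+D$: there $(h_0-H)\langle\nu,\nu_e\rangle_e\ge 0$ forces $H\le h_0$, and the expansion $H=\tfrac{2}{r_0}-\tfrac{4m}{r_0^2}+\tfrac{6m\langle\vec a,\nu_e\rangle_e}{r_0^3}+\mathcal{O}(\sigma^{-3})$ shows $h(H-h_0)\ge(4m|\vec a|-c(C_0^2+B_2+B_3))\sigma^{-4}$, a contradiction once $D$ is large. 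The mass term with $m>0$ is the stabilizing mechanism here (you list it only as an obstacle), and an extra wrinkle specific to this flow is that $\int_{M_t}(h_0-H)\,d\mu_t$ no longer vanishes, so one must work with $h(H-h_0)=hH-\oint H^2$ rather than $H-h$ directly.

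Second, your route to exponential convergence does not close. Pointwise exponential decay of $|\mathring{A}|$ does not yield exponential decay of $|h_0-H|$ in the Schwarzschild background: the Codazzi equation carries an ambient curvature term of size $m\sigma^{-3}$, so $\mathrm{osc}(H)\lesssim \sigma(|\nabla\mathring{A}|+m\sigma^{-3})$ is only $\mathcal{O}(m\sigma^{-2})$, not exponentially small. Huisken--Yau handle this by a direct decay estimate for $\int(H-h)^2$, but that computation breaks for \eqref{flow0} because $\int_{M_t}(h_0-H)\,d\mu_t\ne 0$. The paper's actual new idea --- absent from your proposal --- is to derive a differential inequality for the ratio $h_0/h$, use the identities $\int(H-h)^2=h(h_0-h)|M_t|$ and $\int(H-h_0)^2=h_0(h_0-h)|M_t|$ together with the enclosed-volume monotonicity $\int_0^\infty(h_0-h)\,dt<\infty$ to first get $h_0/h\to 1$, and only then extract the decay $h_0-h\le ce^{-(12-\eps)m\sigma^{-3}t}$. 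Note also that the correct convergence rate is $\mathcal{O}(m\sigma^{-3})$, governed by the translation modes broken by the mass, not the $\mathcal{O}(\sigma^{-2})$ rate you posit. Your uniqueness step (running the flow from a competitor) is in line with the paper.
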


\begin{rem}
    The constant m can be interpreted
as the total mass of an isolated system in the end N and is referred to as ADM–mass in the physical literature, comparing \cite{MR127946}, and see also \cite{Bartnik1986TheMO} for a geometric definition. The famous positive mass theorem states that m is non-negative, provided the scalar curvature of N is non-negative and must be positive unless N is flat \cite{MR526976,MR612249}. 
\end{rem}

\begin{rem}
    In Theorem 4.1 of \cite{huisken1996definition}, Huisken and Yau established the uniqueness result of the constant mean curvature surface $M_\sigma$ within the circular domain $\mathcal{B}_{\sigma}(B_1, B_2, B_3)$, which they derived by using the volume-preserving mean curvature flow (\ref{flow-VPMCF}). They also demonstrated that each $M_{\sigma}$ is strictly stable and form a proper foliation of $N\backslash B_{\sigma_0}(0)$. Furthermore, they proved that the center of gravity of $M_{\sigma}$ has a limit as $\sigma\to\infty$, defining this limit as the "center of mass". In Theorem 5.1 of \cite{huisken1996definition}, they proved uniqueness of the foliation outside of some large metric ball with the radius depending on the mean curvature. And subsequently, Qing and Tian (\cite{QingTian}) provided a complete uniqueness of the foliation outside of some fixed compact set.
    Hence, the constant mean curvature surface we construct in Theorem \ref{thm-HY} is exactly the same as that of Huisken and Yau. In other words, we provide a new approach to construct the foliation of the 3-dimensional asymptotically Schwarzschild space and also the construction of “center of mass”.
\end{rem}
In a forthcoming paper, we will investigate a different but still related geometric flow, which allows us to construct another foliation in 3-dimensional asymptotically Schwarzschild space. This foliation is geometrically different from the one constructed by Huisken and Yau and also a different geometric center of mass. This may be of independent interest. 

\vspace{.1in}

The proof of Theorem \ref{thm1.1} employs an iteration argument, which is inspired by Li's approach for volume preserving mean curvature flow (\ref{flow-VPMCF}). For the proof of Theorem \ref{thm-HY}, we adopt the conceptions from Huisken and Yau (\cite{huisken1996definition}) to show that the surfaces will remain within some fixed circular domain with nice curvature estimates along the flow and then the long time existence and convergence follow naturally. However, several new challenges arise when transfering from the volume preserving mean curvature flow (\ref{flow-VPMCF}) to the area preserving mean curvature flow (\ref{flow0}).

Firstly, along the volume preserving mean curvature flow \eqref{flow-VPMCF}, $\int_{M_t}(h-H)d\mu_t$ vanishes identically, which simplifies the derivation of bounds of $h$. In constrast, in our case, where the $\int_{M_t}(h_0-H)d\mu_t$ is merely non-negative, we are unable to achieve the lower bounds of $h_0$ directly. Instead, we utilize the lower bound of $h(t)$ as the proxy of $h_0(t)$ along the area preserving mean curvature flow \eqref{flow0}, which as a consequence allows us to derive the upper bound of $h_0(t)$.

Secondly, in the proof of the exponential convergence part of Theorem \ref{thm-HY}, we encounter a challenge when trying to estimate the integral of $(h_0-H)^2$ as previously done by Huisken and Yau for the flow (\ref{flow-VPMCF}). This difficulty arises from certain limitations of the flow \eqref{flow0}. To overcome this difficulty, we shift our focus to estimating the quantity $\frac{h_0}{h}$. By analyzing this ratio, we aim to derive the desired exponential decay result. This strategy allows us to bypass the complications associated with the integral estimation while still achieving the convergence of the flow.

The following sections of this paper are organized as follows. In Section 2, we recall some known formulas that will be essential for later discussions. In Section 3, we give the proof of the stability result of the area preserving mean curvature flow under the smallness assumption on the integral of the traceless  second fundamental form. Finally in Section 4, we will prove the long time existence  and exponential convergence of the flow (\ref{flow0}) in a 3-dimensional asymptotically Schwarzschild space starting from a coordinate sphere with sufficiently large radius.

\begin{acknowledgement*}
The authors would like to express their sincere gratitude to professor Jiayu Li for constant encouragement. The first author would also like to thank professor Gang Tian for useful discussions. 
\end{acknowledgement*}

\vspace{.1in}

\section{Notations and evolution equations}

We will use the notations as in \cite{McCoy2004TheMV}. In particular, for a closed hypersurface $M_t\subset \mathbb{F}^{n+1}(c)$, where $\mathbb{F}^{n+1}(c)$ is an $(n+1)$-dimensional space form of constant curvature $c$, let $g=\{g_{ij}\}$  be the induced metric on $M_t$ and $A=\{h_{ij}\}$ denote the second fundamental form of $M_t$. The mean curvature and the traceless second fundamental form are given by
\[ H=g^{ij}h_{ij}, \]
\[ \A_{ij}=h_{ij}-\frac{H}{n}g_{ij}. \]
We need the evolution equations of $M_t$ along the flow \eqref{flow0}.
\begin{lem}(\cite{Miglioranza2020TheVP})\label{evo}
  Suppose the ambient space is a space form $\mathbb{F}^{n+1}(c)$, where $c\in\{1, 0, -1\}$ is its curvature. Along the flow \eqref{flow0}, we have 
    \begin{align*}
        \partial_tg_{ij}=&2(h_0-H)h_{ij},\\
        \partial_td\mu_t=&H(h_0-H)d\mu_t,\\
        \partial_tH=&\Delta H+(|A|^2+nc)(H-h_0),\\
        \pa_t|A|^{2}=&\Delta|A|^{2}-2|\nabla A|^{2}+2|A|^{4}-2h_{0}tr(A^{3})-2ch_0H-2nc|A|^2+4cH^2,\\
\partial_{t}|\mathring{A}|^{2}=&\Delta|\mathring{A}|^{2}-2|\nabla\mathring{A}|^{2}+2|A|^{2}|\mathring{A}|^{2}-2h_{0}\left(tr(\mathring{A}^3)+\frac{2}{n}H|\mathring{A}|^{2}\right)-2nc|\A|^2,\\
\partial_t|\nabla\A|^2=&\Delta|\nabla\A|^2-2|\nabla^2\A|^2+\nabla A*\nabla \A*A*\A+\nabla\A*\nabla\A*A*A\\
&+\nabla\A*\A*A*\bar{Ric}+\nabla\A*\A*\bar{Ric},
    \end{align*}
    where $A*B$ denotes the contraction of the tensors $A$ and $B$ and $\bar{Ric}$ is the Ricci curvature tensor of the ambient space.
\end{lem}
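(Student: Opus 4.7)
The plan is to derive each equation by substituting $\phi(x,t) := h_0(t) - H(x,t)$ into the standard first-variation formulas for a normal flow $\partial_t F = \phi\, \nu$, and then specialize to a space form using $\bar R_{\alpha\beta\gamma\delta} = c(\bar g_{\alpha\gamma}\bar g_{\beta\delta} - \bar g_{\alpha\delta}\bar g_{\beta\gamma})$, which in particular gives $\bar R_{i\nu j\nu} = c\, g_{ij}$ and $\bar{Ric}(\nu,\nu) = nc$. The essential inputs, proved by differentiating the first and second fundamental forms (compare e.g.\ \cite{Huisken-InventMath-1986, Huisken-VPMCF}), are
\begin{align*}
\partial_t g_{ij} &= 2\phi\, h_{ij}, \qquad \partial_t g^{ij} = -2\phi\, h^{ij},\\
\partial_t h_{ij} &= -\nabla_i\nabla_j\phi + \phi\, h_{ik} h^k{}_j - \phi\, \bar R_{i\nu j\nu}.
\end{align*}
The crucial observation driving every simplification below is that $h_0$ depends only on $t$, so $\nabla h_0 \equiv 0$, $\Delta h_0 = 0$, and consequently $\nabla\phi = -\nabla H$, $\Delta\phi = -\Delta H$.

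The first two formulas follow immediately: the metric evolution is by definition, and $\partial_t\sqrt{\det g} = \tfrac12 g^{ij}\partial_t g_{ij}\sqrt{\det g} = \phi H\sqrt{\det g}$ yields the area-form identity. For $\partial_t H$, tracing $\partial_t h_{ij}$ against the evolving metric gives $\partial_t H = -\phi(|A|^2 + nc) - \Delta\phi$, which after substituting $\phi = h_0 - H$ is the third line. For $\partial_t|A|^2$ and $\partial_t|\A|^2$, I would combine $\partial_t h_{ij}$ with Simons' identity in a space form,
\[
\Delta h_{ij} = \nabla_i\nabla_j H + H\, h_{ik} h^k{}_j - |A|^2 h_{ij} + c\,(H g_{ij} - n h_{ij}),
\]
to convert the Hessian of $H$ (arising from $-\nabla_i\nabla_j\phi$) into a Laplacian of $h_{ij}$ plus curvature/cubic remainders. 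Contracting with $h^{ij}$ gives the $|A|^2$ equation directly; for the $|\A|^2$ version one can either contract with $\A^{ij}$ or subtract $\tfrac{1}{n}\partial_t H^2$, using the algebraic decompositions $|A|^2 = |\A|^2 + \tfrac{1}{n}H^2$ and $\tr(A^3) = \tr(\A^3) + \tfrac{3}{n}H|\A|^2 + \tfrac{1}{n^2}H^3$ to clean up the cubic terms.

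The evolution of $|\nabla\A|^2$ is the most labor-intensive step and I expect it to be the main obstacle; here I would first derive an equation for $\nabla_k\A_{ij}$ by covariantly differentiating the equation for $\A_{ij}$ and commuting $\nabla_k$ with $\partial_t$, which introduces terms schematically of type $(h_0-H)\, A*\nabla A$. Then $\partial_t|\nabla\A|^2 = 2\langle\nabla\A, \partial_t\nabla\A\rangle + (\partial_t g^{**})\cdot(\nabla\A)^{\otimes 2}$, and commuting $\nabla$ with $\Delta$ via the Ricci identity injects the ambient curvature contractions $\bar{Ric}*\nabla\A*\A$. Regrouping everything in Huisken's $*$-notation yields the stated right-hand side. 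The difficulty throughout is not conceptual but combinatorial: tracking signs, distinguishing $A$ from $\A$ in the cubic terms, and ensuring the nonlocal quantity $h_0$ enters only where its spatial derivatives do not. A useful sanity check is that setting $h_0 \equiv 0$ recovers Huisken's pure MCF formulas, while $c=0$ recovers the Euclidean expressions of \cite{Li2009TheVM, Miglioranza2020TheVP}.
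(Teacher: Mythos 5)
The paper does not actually prove this lemma; it is quoted from \cite{Miglioranza2020TheVP}, so your derivation is supplying what the paper omits. Your outline is the standard one and it is correct: the first-variation formulas you list have the right signs (with $\phi=-H$ they reproduce Huisken's mean curvature flow equations, and tracing gives $\partial_tH=-\Delta\phi-\phi(|A|^2+nc)$ as you state), and combining $\partial_th_{ij}$ with the space-form Simons identity in the form $\tfrac12\Delta|A|^2=|\nabla A|^2+h^{ij}\nabla_i\nabla_jH+H\,\mathrm{tr}(A^3)-|A|^4+nc|A|^2-cH^2$ yields exactly the stated terms $-2ch_0H-2nc|A|^2+4cH^2$ in the $|A|^2$ equation; subtracting $\tfrac1n\partial_tH^2$ and using your decompositions $|A|^2=|\A|^2+\tfrac1nH^2$ and $\mathrm{tr}(A^3)=\mathrm{tr}(\A^3)+\tfrac3nH|\A|^2+\tfrac1{n^2}H^3$ then collapses the remainder to $-2h_0\bigl(\mathrm{tr}(\A^3)+\tfrac2nH|\A|^2\bigr)-2nc|\A|^2$ as claimed. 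The one place where an error is easy to make and would not be caught by your proposed sanity checks is the sign of the constant-curvature correction $+nc|A|^2-cH^2$ in Simons' identity: both the $c=0$ check and the $h_0\equiv0$ check are blind to it, so pin it down independently (e.g.\ by the minimal-hypersurface case in $\mathbb{S}^{n+1}$, where $\tfrac12\Delta|A|^2=|\nabla A|^2+(n-|A|^2)|A|^2$). The schematic $|\nabla\A|^2$ equation follows as you outline, since in a space form all commutator and ambient-curvature contributions are of the listed $*$-types.
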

 
By the first variation of the area, we know the flow \eqref{flow0} preserves 
$\int_{M_t}E_0d\mu_t=|M_t|$ the area of $M_t$. By the H\"older inequality, we have
\begin{equation}\label{vol-evo}
    \frac{d}{dt}\text{Vol}(\Phi_t)=\int_{M_t}(h_0-H)d\mu_t= \frac{\int_{M_t}H^2d\mu_t}{\int_{M_t}Hd\mu_t}|M_t|-\int_{M_t}Hd\mu_t\geq 0, 
\end{equation}
provided that $\int_{M_t}Hd\mu_t>0$. Therefore, we see that the flow \eqref{flow0} preserves the area of $M_t$ and does not decrease the volume enclosed by $M_t$ and we call it the area preserving mean curvature flow.

\vspace{.1in}

\section{Area preserving mean curvature flow in space forms}

In this section, we will prove the long time existence and convergence of the flow \eqref{flow0} in space forms under the assumption of smallness of the traceless second fundamental form. Our primary focus will be to prove \ref{thm1.1} for the flow in $\mathbb{R}^{n+1}$ using the iteration argument akin to that presented in \cite{Li2009TheVM}.

We first show the short time existence in $\mathbb{R}^{n+1}(n\geq2)$.

\begin{prop}\label{prop1.2}
     Suppose that the hypersurface $M_0$ satisfies
$$
|A|(0)\leq\Lambda,\quad H(0)\geq\gamma>0,\quad
\int_{M_0}|\mathring{A}|^2(0)\leq\eps.
$$
Then there exists a positive $T=T(\Lambda, \ga, n)$ such that $M_t$ satisfies
\[
|A|(t)\leq2\Lambda\quad and\quad H(t)\geq\gamma/2\quad \text{for} \:t\in[0,T],
\]
and for fixed $\tau>0$, there exists some constant $C_1=C_1(n,\tau,\Lambda,\ga)$ such that 
\begin{gather*}
|H-h_0|^2(t)\leq C_1(n,\tau,\Lambda,\ga)\eps^{1/2},\\
|\mathring{A}|(t)\leq C_1(n,\tau,\Lambda,\ga)\eps^{1/2},\quad for\:t\in[\tau,T].
\end{gather*}
\end{prop}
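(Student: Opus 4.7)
The plan is to combine short-time existence with a continuity argument for the pointwise bounds on $A$ and $H$, then use the evolution equations of Lemma \ref{evo} to propagate an $L^2$-smallness of $\mathring A$, and finally upgrade this into the stated pointwise bounds by a parabolic Moser/mean-value inequality exploiting the waiting time $\tau$.

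Short-time smoothness of \eqref{flow0} follows from standard parabolic theory applied to the local graph formulation, since the non-local term $h_0(t)$ is a smooth scalar in $t$ while $H>0$ persists and introduces no new difficulty. Let $T_1$ be the largest time on which $|A|\le 2\Lambda$ and $H\ge \gamma/2$ hold on $[0,T_1]$. On this interval $h_0\le H_{\max}\le 2\sqrt n\,\Lambda$, so the evolutions of $|A|^2$ and $H$ in Lemma \ref{evo} (with $c=0$) combined with the maximum principle yield
\[
\tfrac{d}{dt}\max_{M_t}|A|^2\le C(\Lambda),\qquad \tfrac{d}{dt}\min_{M_t}H\ge -C(\Lambda),
\]
so that $T_1\ge T(\Lambda,\gamma,n)>0$.

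Combining the evolution of $|\mathring A|^2$ from Lemma \ref{evo} with $\partial_t d\mu_t = H(h_0-H)d\mu_t$ and integrating by parts, the Laplacian term drops out, the gradient term $-2|\nabla\mathring A|^2$ is non-positive, and every remaining term is pointwise bounded by $C(\Lambda)|\mathring A|^2$ thanks to Step 1. Gronwall then gives $\int_{M_t}|\mathring A|^2\le C(\Lambda)\eps$ on $[0,T]$, along with the spacetime bound $\int_0^T\!\int|\nabla\mathring A|^2 d\mu_s\, ds\le C(\Lambda)\eps$. Under the same ambient bounds, $u:=|\mathring A|^2$ satisfies the scalar inequality $(\partial_t-\Delta)u\le C(\Lambda)u$, and the parabolic Moser / mean-value inequality (whose constants depend only on $n,\tau,\Lambda,\gamma$ via the uniform curvature bound and area preservation) yields
\[
u(x,t)\le C(n,\tau,\Lambda,\gamma)\int_{t-\tau}^{t}\!\!\int_{M_s}u\,d\mu_s\,ds \le C_1\eps, \qquad t\in[\tau,T],
\]
hence $|\mathring A|(t)\le C_1\eps^{1/2}$. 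For $|H-h_0|^2$, Codazzi gives $\nabla_j H=\tfrac{n}{n-1}\nabla^i\mathring A_{ij}$, and parabolic regularity applied to the $\mathring A$-equation after waiting time converts the $L^2$-smallness of $\nabla\mathring A$ into pointwise smallness of $\nabla H$; using the decomposition $(H-h_0)^2\le 2(H-\bar H)^2+2(\bar H-h_0)^2$, the identity $h_0-\bar H=\int(H-\bar H)^2/\int H$, and $H\ge\gamma/2$, the problem reduces to estimating $|H-\bar H|$ which is done via Poincar\'e and Codazzi combined with the preceding Moser estimate.

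The main obstacle is arranging the parabolic Moser / mean-value inequality so that all constants depend only on $(n,\tau,\Lambda,\gamma)$ rather than on finer invariants of $M_t$; this requires using area preservation to control $|M_t|$ and the uniform curvature bound $|A|\le 2\Lambda$ to ensure uniform local geometry. A related subtlety in the $|H-h_0|^2$ part is that a naive bound on $h_0'(t)$ gives only a constant-size forcing; one must instead exploit the integral bounds on $\mathring A$ and $\nabla\mathring A$, together with Codazzi and integration by parts, to show that $h_0'(t)$ itself is small of order $\eps^{1/2}$.
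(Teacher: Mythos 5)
Your proposal follows essentially the same route as the paper's proof: an ODE/maximum-principle continuity argument preserving $|A|\le 2\Lambda$ and $H\ge\gamma/2$ on a definite interval $T(\Lambda,\gamma,n)$ (using $h_0\le\max H\le 2\sqrt n\,\Lambda$), Gronwall for $\int_{M_t}|\mathring A|^2$, Moser iteration for the pointwise bound on $|\mathring A|$, and then Codazzi together with parabolic regularity for $\nabla\mathring A$ to control $H-h_0$. The only cosmetic difference is in the last step: the paper bounds $|H-h_0|\le\mathrm{diam}(M_t)\max|\nabla H|$ directly (since $\min H\le h_0\le\max H$), whereas you detour through $h$ and the identity $h_0-h=\int(H-h)^2/\int H$; both reduce to the same oscillation estimate for $H$, and in both arguments the constants also depend on the area bound $v$ through the diameter/Sobolev inequalities, exactly as in the paper's own proof.
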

\begin{proof}
    From Lemma \ref{evo}, the evolution equation for $\abs{A}^2$ is
    \[
\pa_t|A|^{2}=\Delta|A|^{2}-2|\nabla A|^{2}+2|A|^{4}-2h_{0}tr(A^{3}),
\]
which implies
\[
\partial_t|A|\leq\Delta|A|+|A|^3+h_{0}|A|^{2} .
\]
Here we used the inequality (see Lemma 2.2 of \cite{Huisken-Sinestrari-CVPDE1999})
\[
|tr(A^{3})|\leq |A|^3.
\]
We use the method in \cite{Miglioranza2020TheVP} to show the short time existence. Let
\[
\mathcal{S}=\left\{\tau>0\mid H(t)\geq\frac{\ga}{2},\quad|A|(t)\leq2\Lambda ,\quad\forall t\in[0,\tau]\right\}.
\]
We claim that
\begin{claim}
    $T=\sup \mathcal{S}>0$.
\end{claim}
Indeed, for $t\in[0, T]$, we have
\[
h_0(t)=\frac{\int_{M_t} H^2d\mu_t}{\int_{M_t} Hd\mu_t}\leq\max_{M_t} H(x,t)\leq\sqrt{n}\max_{M_t}|A|(x,t)\leq2\Lambda\sqrt{n}.
\]
Let $f(t)=\max_{M_t}\abs{A}$,
then there holds
\[
\partial_{t}f\leq f^{3}+\sqrt{n}f^{3}\leq(\sqrt{n}+1)(2\Lambda)^{3}.
\]
This implies 
\[
f(t)\leq(\sqrt{n}+1)(2\Lambda)^{3}t+\Lambda.\]
Taking
\[(\sqrt{n}+1)(2\Lambda)^{3}t+\Lambda\leq2\Lambda,\]
we get
\[t\leq\frac{1}{8(\sqrt{n}+1)\Lambda^{2}}:=T_{1}.
\]
Then, for $t\in[0, T_1]$, we have $|A|(t)\leq2\Lambda$.
Note also that, for $t\in[0, T]$,
\[
\partial_{t}H=\Delta H+|A|^{2}(H-h_{0})\geq\Delta H-|A|^2h_0.
\]
Since $|A|^2h_0\leq\sqrt{n}(2\Lambda)^3$, we have
\[
\pa_{t}H=\Delta H+|A|^{2}(H-h_{0})\geq\Delta H-\sqrt{n}(2\Lambda)^3.
\]
Thus
\[
\pa_{t}\min_{M_t}H\geq-\sqrt{n}(2\Lambda)^3.
\]
It follows
\[
\min_{M_t}H\geq\min_{M_0}H-\sqrt{n}(2\Lambda)^3t\geq \ga-\sqrt{n}(2\Lambda)^3t.
\]
By taking
\[ \ga-\sqrt{n}(2\Lambda)^3t\geq\frac{\ga}{2}, \]
we obtain that
\[
t\leq\frac{\ga}{16\sqrt{n}\Lambda^3}:=T_{2}.
\]
Let $T=\min\{T_{1},T_{2}\}$, then $ T=T(\Lambda,\gamma,n)>0$, which proves the claim. 

Now on $[0,T]$, we see that
\begin{align*}
&\pa_t\int_{M_t}|\mathring{A}|^{2}d\mu_{t}\\
 =&\int_{M_t}\left[\Delta|\mathring{A}|^{2}-2|\nabla \A|^{2}+2|A|^{2}|\mathring{A}|^{2}-2h_{0}(tr(\A)^{3}+\frac{2}{n}H|\A|^{2})+|\A|^2H(h_{0}-H)\right]d\mu_{t} \\
\leq&2(2\Lambda)^{2}\int_{M_{t}}|\A|^{2}d\mu_t+2\sqrt{n}(2\Lambda)^{2}\int_{M_{t}}|\A|^{2}d\mu_t+2n(2\Lambda)^{2}\int_{M_{t}}|\A|^{2}d\mu_{t} \\ 
=&C(n)\Lambda^{2}\int_{M_t}|\mathring{A}|^{2}d\mu_t.
\end{align*}
We get 
\[
\int_{M_t}|\A|^{2}d\mu_{t}\leq e^{C(n)\Lambda^{2}t}\eps, \quad\text{for}\quad t\in[0,T].
\]
Now the evolution equation for $|\mathring{A}|^2$ gives 
\begin{align*}
\partial_{t}|\mathring{A}|^{2}&=\Delta|\mathring{A}|^{2}-2|\nabla\mathring{A}|^{2}+2|A|^{2}|\mathring{A}|^{2}-2h_{0}\left(tr(\mathring{A})^{3}+\frac{2}{n}H|\mathring{A}|^{2}\right)\notag \\
&\leq\Delta|\mathring{A}|^{2}+C(n)\Lambda^{2}|\mathring{A}|^{2}\quad  \text{for}\quad t\in[0,T].
\end{align*}
The Moser iteration then implies
\begin{align*}
|\mathring{A}|(t) & \leq C(n,\Lambda,\tau)\left(\int_{t-\tau}^{t}\int_{M_s}|\mathring{A}|^{2}d\mu_sds\right)^{\frac12}\notag\\
& \leq C(n,\Lambda,\tau)\varepsilon^{\frac{1}{2}}e^{C(n)\Lambda^{2}t}\notag\\
& \leq C_{1}(n,\Lambda,\tau,\ga) \varepsilon^{\frac12},\quad for\, t\in[\tau,T].
\end{align*}
By gradient estimates and the fact that $|A|\leq 2\Lambda$, we have
\begin{equation}\label{gra}
   |\nabla A|\leq C(\tau,\Lambda,\gamma),\quad|\nabla^{2}A|\leq C(\tau,\Lambda,\gamma),\quad t\in[\tau,T]. 
\end{equation}
Integrating by parts and Schwartz inequality imply that
\[
\int_{M_t}|\n\mathring{A}|^{2}d\mu_t\leq\left(\int_{M_t}|\mathring{A}|^{2}\right)^{\frac{1}{2}}\left(\int_{M_t}|\Delta\mathring{A}|^{2}\right)^{\frac{1}{2}}\leq C\left(\tau,\Lambda,\gamma, v\right)\varepsilon^{\frac{1}{2}}e^{\frac{1}{2}C(n)\Lambda^{2}t}. 
\]
By Lemma \ref{evo} and \eqref{gra}, there holds 
\[
\partial_t|\nabla\mathring{A}|^{2}\leq\Delta|\nabla\mathring{A}|+C(\tau,\Lambda)|\A|+\Lambda^{2}|\nabla\mathring{A}|^2,\quad\text{for}\quad t\in[\tau, T]. \]
Using the Moser iteration, we have
\[ |\nabla\A|(t)\leq C(\tau, \Lambda, \ga, v, n)\varepsilon^{\frac{1}{4}}, \quad\text{for}\quad t\in [2\tau, T]. \]
By Lemma 1.3 in \cite{huisken1996definition} and Lemma 12 in \cite{Li2009TheVM}, we get for $t\in [2\tau, T]$ that
\[
|H-h_{0}|(t)\leq diam(M_t)\max(|\n H|)\leq C(n,\Lambda,v,\ga)\max(|\n \mathring{A}|)\leq C_{1}(\tau,\Lambda,v, n,\gamma)\varepsilon^{\frac{1}{4}}. 
\]
\end{proof}

The key of the long time existence and exponential convergence of the flow is the following lemma which shows that the traceless second fundamental form decays exponentially fast.
 
\begin{lem}\label{lem1.3}
    For any $\ga>0$, $\exists \eps_0=\eps_0(n,\ga)$ such that if $M_t$ satisfies 
    \[
    H(t)\geq\ga>0,\quad |\mathring{A}|(t)\leq \eps,\quad \abs{H-h_0}^2(t)\leq \eps,
    \]
    for $t\in[0, T]$ and $\eps\in(0, \eps_0)$.
    Then 
    \[
    |\mathring{A}|(x, t)\leq e^{-\a t}\max_{M_0}|\mathring{A}|(0),\quad \text{for\ }t\in[0,T],
    \]
    where $\a=\f{\ga^2}{4n}$.
\end{lem}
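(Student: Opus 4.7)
The plan is to apply the parabolic maximum principle to $f := |\A|^2$, using the evolution equation from Lemma \ref{evo} (with $c=0$), and show that $f\,e^{2\a t}$ is a subsolution of the heat equation, so that the pointwise bound $|\A|^2(x,t) \leq e^{-2\a t}\max_{M_0}|\A|^2$ follows directly.

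To identify the dominant negative reaction term, I would drop $-2|\nabla\A|^2$, bound $|\tr(\A^3)| \leq |\A|^3$, and rewrite using $|A|^2 = |\A|^2 + H^2/n$ together with the algebraic identity
\[
H(H - 2h_0) = (H-h_0)^2 - h_0^2.
\]
This recasts the evolution as
\[
\partial_t f \leq \Delta f + \Bigl( 2|\A|^2 - \frac{2 h_0^2}{n} + \frac{2(H-h_0)^2}{n} + 2 h_0 |\A| \Bigr) f,
\]
producing the crucial negative contribution $-\frac{2 h_0^2}{n}f$. The hypothesis $H \geq \ga$ immediately propagates to $h_0 \geq \ga$ from the definition $h_0 = \int H^2 d\mu_t / \int H d\mu_t$, which lets me absorb the cubic perturbation: for $\eps \leq \ga/(2n)$,
\[
-\frac{2h_0^2}{n} + 2 h_0 \eps \;=\; -\frac{2 h_0}{n}(h_0 - n\eps) \;\leq\; -\frac{h_0^2}{n} \;\leq\; -\frac{\ga^2}{n}.
\]
Combined with $(H-h_0)^2 \leq \eps$ and $|\A|^2 \leq \eps^2$, the full coefficient is bounded by $2\eps^2 + 2\eps/n - \ga^2/n$, which is $\leq -\ga^2/(2n) = -2\a$ once $\eps \leq \eps_0(n,\ga)$ is sufficiently small, yielding the required differential inequality $\partial_t f \leq \Delta f - 2\a f$.

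The main obstacle is the algebraic cancellation: without combining $2|A|^2|\A|^2$ with $-\frac{4h_0 H}{n}|\A|^2$, the reaction term appears positive, and only after the identity $H(H-2h_0) = (H-h_0)^2 - h_0^2$ does the favorable sign $-2h_0^2/n$ emerge. A secondary delicate point is the cubic term $-2h_0\,\tr(\A^3)$, which at first glance introduces an uncontrolled factor of $h_0$; the resolution is to extract one factor $|\A|\leq \eps$ from $|\A|^3$ and balance the resulting $h_0\eps$ against part of the main negative term using $h_0\geq \ga$, so that no a priori upper bound on $h_0$ is needed and $\eps_0$ depends only on $n$ and $\ga$ as stated.
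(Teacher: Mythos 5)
Your argument is correct and follows essentially the same route as the paper: both start from the evolution equation for $|\A|^2$, drop the gradient term, bound $|\mathrm{tr}(\A^3)|\le|\A|^3$, isolate a reaction coefficient of the form $-\ga^2/n+O(\eps)$, and conclude with the parabolic maximum principle. The only (cosmetic) difference is in the algebra: the paper substitutes $h_0=H-(H-h_0)$ and, after Cauchy--Schwarz, keeps the dominant negative term as $-\frac{1}{n}H^2|\A|^2$, whereas you complete the square via $H(H-2h_0)=(H-h_0)^2-h_0^2$ and use $h_0\ge\ga$ (which does follow from $H\ge\ga$ and the definition of $h_0$) to obtain $-\frac{2}{n}h_0^2|\A|^2$; both yield the stated rate $\a=\ga^2/(4n)$.
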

\begin{proof}
    From Lemma \ref{evo}, the evolution equation arises
    \[
\partial_{t}|\A|^{2}=\Delta|\A|^{2}-2|\nabla \A|^{2}+2|A|^{2}|\A|^{2}-2h_{0}\big((tr\A^3)+\frac{2}{n}H|\A|^{2}\big).
\]
Note that
\begin{align*}
&-2h_{0}(tr(\A^3)+\frac{2}{n}H|\A|^{2})\\
\leq&2h_{0}|\A|^{3}-\frac{4}{n}h_{0}H|\A|^{2}\\
\leq&2H|\A|^{3}+2|h_{0}-H||\A|^{3}-\frac{4}{n}H^{2}|\A|^{2}+\frac{4}{n}H|H-h_{0}||\A|^{2}.
\end{align*} 
By the Cauchy inequality, there hold
\[ 2H|\A|^3\leq \frac{1}{2n}H^2|\A|^2+2n|\A|^4, \]
\[ \frac{4}{n}H|H-h_{0}||\A|^{2}\leq \frac{1}{2n}H^2|\A|^2+2n\left(\frac{2}{n}\right)^2|H-h_0|^2|\A|^2. \]
Then
\begin{align*}
&2\abs{A}^2|\A|^2-2h_0(tr(\A^3)+\f2n H|\A|^2)\\
\leq&2|\A|^{4}-\frac{2}{n}H^{2}|\A|^{2}+2H|\A|^{3}+2|h_{0}-H||\A|^{3}+\frac{4}{n}H|H-h_{0}||\A|^{2}\\
\leq&-\frac{1}{n}H^{2}|\A|^{2}+(2n+2)|\A|^{4}+2|h_{0}-H||\A|^{3}+\frac{8}{n}|H-h_{0}|^{2}|\A|^{2} \\
\leq&-\frac{1}{n}H^{2}|\A|^{2}+\left[(2n+2)\eps^{2}+2\eps^{\frac{3}{2}}+\frac{8}{n}\eps\right]|\A|^{2} \\
=&-\frac{1}{n}H^{2}|\mathring{A}|^{2}+c(n)\eps|\mathring{A}|^{2} \\
\leq&-\frac{\gamma^{2}}{n}|\A|^{2}+c(n)\eps|\A|^{2}\\
=&-\left(\f{\ga^2}n-c(n)\eps\right)|\A|^2.
\end{align*}
This implies 
\[
\pa_{t}|\A|^{2}\leq\Delta|\A|^{2}-(\frac{\gamma^{2}}{n}-c(n)\eps)|\A|^{2}.\]
By the maximum principal, we have
\[
\max_{M_{t}}|\A|^{2}(t)\leq e^{-(\frac{\gamma^{2}}{n}-c(n)\varepsilon)t}\max_{M_0}|\A|^{2}(0).\]
Choose $\eps\in(0,\eps_0)$ such that
$\frac{\gamma^{2}}{n}-c(n)\eps\geq\frac{\gamma^{2}}{2n}$, 
we then obtain
\[
|\A(t)|\leq e^{-\frac{\gamma^{2}}{4n}t}\max_{M_{0}}|\A|(0)\leq\varepsilon e^{-\alpha t}.
\]
\end{proof}

Applying the same argument of Lemma 16 in \cite{Li2009TheVM}, we have the following lemma.

\begin{lem}\label{lem1.4}
  Suppose that the solution $M_t$ satisfies
  \[
|A| (t) \leq \Lambda,\quad|\A| (t) \leq \eps e^{-\a t},\quad H(t)>0,\quad \text{for}\quad t\in [0, T].
\]
Then for some $\tau>0$, we have
$$|H-h_0|^2(t)\leq C(n,\tau,\Lambda)\eps e^{-\a t},\quad for\:t\in[\tau,T].
$$
\end{lem}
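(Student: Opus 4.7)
The plan is to mirror the third stage of the proof of Proposition \ref{prop1.2}: bound $|H-h_0|$ through the oscillation of $H$ on $M_t$, which in turn is controlled by $|\nabla\A|$ via a Codazzi identity and a diameter estimate. The new point is that every intermediate estimate should carry the exponential factor $e^{-\alpha t}$ coming from the pointwise hypothesis $|\A|(t)\leq\varepsilon e^{-\alpha t}$.

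\textbf{Step 1: Integral decay of $|\nabla\A|$.} Integration by parts on the closed hypersurface together with the Cauchy--Schwarz inequality gives
\[
\int_{M_t}|\nabla\A|^{2}\,d\mu_t\leq\left(\int_{M_t}|\A|^{2}\,d\mu_t\right)^{1/2}\left(\int_{M_t}|\Delta\A|^{2}\,d\mu_t\right)^{1/2}.
\]
The area is preserved by \eqref{flow0}, and the gradient estimate \eqref{gra} gives $|\nabla^{2}A|\leq C(\tau,\Lambda,\gamma)$ for $t\in[\tau,T]$, so $\int_{M_t}|\Delta\A|^{2}\,d\mu_t\leq C$. Plugging in $|\A|\leq\varepsilon e^{-\alpha t}$ yields the integral decay
\[
\int_{M_t}|\nabla\A|^{2}\,d\mu_t\leq C(n,\tau,\Lambda,\gamma)\,\varepsilon\, e^{-\alpha t},\quad t\in[\tau,T].
\]

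\textbf{Step 2: Pointwise decay via Moser iteration.} In Euclidean ambient space $\bar{Ric}=0$, and using $|A|\leq\Lambda$ together with the uniform bound on $|\nabla A|$, the evolution equation for $|\nabla\A|^{2}$ in Lemma \ref{evo} reduces, after a Cauchy absorption of the mixed term $\nabla A\ast\nabla\A\ast A\ast\A$, to the differential inequality
\[
\partial_t|\nabla\A|^{2}\leq\Delta|\nabla\A|^{2}+C(\tau,\Lambda)|\nabla\A|^{2}+C(\tau,\Lambda)|\A|^{2}.
\]
Parabolic Moser iteration on the time slab $[t-\tau,t]$ then bounds $|\nabla\A|^{2}(t)$ by a constant times the spacetime $L^{1}$ of $|\nabla\A|^{2}$ plus the supremum of the forcing $|\A|^{2}$; both quantities are controlled by $C\varepsilon e^{-\alpha t}$ (the factor $e^{\alpha\tau}$ from the shifted interval is absorbed into the constant, and $|\A|^{2}\leq\varepsilon^{2}e^{-2\alpha t}$ decays even faster). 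Hence
\[
|\nabla\A|(t)\leq C(n,\tau,\Lambda,\gamma)\,\varepsilon^{1/2}e^{-\alpha t/2},\quad t\in[2\tau,T].
\]

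\textbf{Step 3: From $|\nabla\A|$ to $|H-h_0|$.} The uniform bounds $|A|\leq\Lambda$ and $H>0$, together with the preserved area, give a uniform diameter bound for $M_t$. Combining the Codazzi-based inequality $|\nabla H|\leq C(n)|\nabla\A|$ (Lemma 12 in \cite{Li2009TheVM}) with the oscillation estimate $|H-h_0|\leq\mathrm{diam}(M_t)\cdot\max_{M_t}|\nabla H|$ (Lemma 1.3 in \cite{huisken1996definition}) yields
\[
|H-h_0|^{2}(t)\leq C(n,\tau,\Lambda,\gamma)\max_{M_t}|\nabla\A|^{2}\leq C(n,\tau,\Lambda,\gamma)\,\varepsilon\, e^{-\alpha t},
\]
after relabeling $2\tau\to\tau$, which is the claim. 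The main obstacle is ensuring that Moser iteration genuinely preserves the exponential decay rate in the presence of the inhomogeneous forcing $C|\A|^{2}$; this succeeds precisely because $|\A|^{2}$ decays at the faster rate $e^{-2\alpha t}$, so the forcing is no worse than the homogeneous piece and the target rate $e^{-\alpha t}$ survives.
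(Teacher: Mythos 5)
Your argument is correct and is essentially the proof the paper intends: it reproduces the final stage of the proof of Proposition \ref{prop1.2} (integration by parts for $\int|\nabla\A|^2$, Moser iteration for $|\nabla\A|$, then the Codazzi/oscillation bound $|H-h_0|\leq\mathrm{diam}(M_t)\max|\nabla H|\leq C\max|\nabla\A|$), now carrying the factor $e^{-\alpha t}$ through each step exactly as in Lemma 16 of \cite{Li2009TheVM}, which is what the paper cites. The only cosmetic discrepancy is that your constants carry a dependence on $\gamma$ and the area bound $v$ (via the diameter estimate), whereas the lemma is stated with $C(n,\tau,\Lambda)$; this is harmless since those quantities are controlled in the context where the lemma is applied.
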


Finally, we estimate the mean curvature by analysing $h$ and $h_0$ along the flow, which is the key difference from the volume preserving mean curvature flow (\ref{flow-VPMCF}).



\begin{lem}\label{lem1.6}
If 
\[
    H(t)>0,\quad|\A|(t)\leq\varepsilon e^{-\a t},\quad|H-h_{0}|^{2}(t)\leq\varepsilon e^{-\a t},\quad\text{for}\quad t\in[0,T],
\]
we have
\[
h(0)e^{-\f\eps\a}-\eps^{\frac{5}{2}}\frac{2}{5\a}e^{\frac\eps\a}\leq h(t)\leq h(0)+\frac{2}{5\a}\eps^{\frac{5}{2}},\quad\text{for}\quad t\in[0,T].
\]
Denote
\[
h(0)e^{-\f\eps\a}-\eps^{\frac{5}{2}}\frac{2}{5\a}e^{\frac\eps\a}=C_{1}(h(0),\a,\eps)>0,
\]
then 
\[
h_0(t)\leq \exp(\frac{2\eps^{\frac{5}{2}}}{5\a C_{1}}+\frac{\eps}{\a n})(h_{0}(0)+\frac{2\eps^3}{3\a C_{1}})=C_{2}(n,h(0),h_{0}(0),\eps,\a)>0. 
\]
Furthermore, we have
\[H(t)=H-h_{0}+h_{0}\leq h_{0}(t)+|H-h_{0}|(t)\leq C_{2}+(\eps e^{-\a t})^{\frac{1}{2}}\leq C_{2}+\eps^{\frac{1}{2}},\]
\[H(t)=h_{0}+H-h_{0}\geq h_{0}(t)-|H-h_{0}|(t)\geq h(t)-\eps^{\frac{1}{2}}\geq C_1-\eps^{\frac{1}{2}}, \]
for $t\in[0, T]$.
\end{lem}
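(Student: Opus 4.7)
The plan is to differentiate the non-local quantities $h(t) = \tfrac{1}{|M_0|}\int_{M_t} H\,d\mu_t$ and $h_0(t) = \tfrac{\int_{M_t} H^2\,d\mu_t}{\int_{M_t} H\,d\mu_t}$ in turn, exploit the tautological identity
\[
\int_{M_t} H(H-h_0)\,d\mu_t = 0
\]
arising from the very definition of $h_0$, and integrate the resulting linear differential inequalities using the exponential decay of $|\A|$ and $|H-h_0|$ provided by the hypotheses.

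\emph{Step 1: bounds on $h(t)$.} Since the area is preserved, $h(t) = \tfrac{1}{|M_0|}\int_{M_t} H\,d\mu_t$. Using Lemma~\ref{evo}, the identity above, and $|A|^2 = |\A|^2 + H^2/n$, a short computation yields
\[
h'(t) = \frac{1}{|M_0|}\left[\int_{M_t} |\A|^2 (H-h_0)\,d\mu_t - \frac{n-1}{n}\int_{M_t} H(H-h_0)^2\,d\mu_t\right],
\]
where we have also used $\int H^2(H-h_0) = \int H(H-h_0)^2$, another consequence of the identity. The second integrand is non-negative, so $h'(t)\le \eps^{5/2}e^{-5\a t/2}$, and integrating delivers the upper bound $h(t)\le h(0)+\tfrac{2}{5\a}\eps^{5/2}$. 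For the lower bound, the estimate $\int H(H-h_0)^2 \le \eps e^{-\a t}\int H = \eps e^{-\a t}\,h(t)|M_0|$ gives the linear ODI
\[
h'(t)\ge -\eps^{5/2}e^{-5\a t/2}-\eps e^{-\a t}\, h(t),
\]
and solving with the integrating factor $\mu(t) = \exp\bigl(\int_0^t \eps e^{-\a s}ds\bigr)\le e^{\eps/\a}$ (together with $\mu(t)\ge 1$) produces the stated lower bound $h(t)\ge h(0)e^{-\eps/\a}-\tfrac{2\eps^{5/2}}{5\a}e^{\eps/\a} =: C_1 > 0$.

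\emph{Step 2: bound on $h_0(t)$.} Differentiating $h_0=\int H^2/\int H$ and using Lemma~\ref{evo} together with integration by parts,
\[
\bigl(\textstyle\int H^2\bigr)' - h_0\bigl(\int H\bigr)' = -2\int|\nabla H|^2\,d\mu_t + \int |A|^2(2H-h_0)(H-h_0)\,d\mu_t - \int H^2(H-h_0)^2\,d\mu_t.
\]
Set $u:=H-h_0$, split $|A|^2=|\A|^2+H^2/n$ and $2H-h_0=h_0+2u$, and use $\int Hu=0$ to eliminate $\int u$ in favor of $-\int u^2/h_0$; the right-hand side collapses to an algebraic combination of $\int u^2$, $\int u^3$, $\int u^4$ and $\int|\A|^2 u$-type terms. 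Dropping those with definite non-positive sign and dividing by $\int H\ge C_1|M_0|$ yields a linear differential inequality of the form
\[
h_0'(t)\le \left(\frac{\eps^{5/2}e^{-5\a t/2}}{C_1}+\frac{\eps e^{-\a t}}{n}\right)h_0(t)+\frac{2\eps^3 e^{-3\a t}}{C_1}.
\]
The standard integrating-factor argument, together with $\int_0^\infty A(s)\,ds\le \tfrac{2\eps^{5/2}}{5\a C_1}+\tfrac{\eps}{\a n}$ and $\int_0^\infty B(s)\,ds\le \tfrac{2\eps^3}{3\a C_1}$, produces the claimed bound on $h_0(t)$.

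\emph{Step 3 and main difficulty.} From $|H-h_0|\le \eps^{1/2}e^{-\a t/2}\le \eps^{1/2}$ the upper bound $H\le h_0+\eps^{1/2}\le C_2+\eps^{1/2}$ is immediate; for the lower bound, the Cauchy--Schwarz inequality $\bigl(\int H\bigr)^2\le |M_0|\int H^2$ gives $h\le h_0$, hence $H\ge h_0-\eps^{1/2}\ge h-\eps^{1/2}\ge C_1-\eps^{1/2}$. The main difficulty lies in Step~2: because $h_0$ is non-local, tracking the sign and size of the many cubic and quartic corrections that arise from the expansion $H=h_0+u$ requires careful use of the orthogonality $\int Hu=0$. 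The strictly positive lower bound $h(t)\ge C_1$ from Step~1 is what enables division by $\int H$ and closes the Gronwall-type estimate on $h_0(t)$.
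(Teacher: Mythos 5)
Your proposal is correct and follows essentially the same route as the paper: differentiating $h$ and $h_0$, using the identity $\int_{M_t}H(H-h_0)\,d\mu_t=0$ to reorganize the cubic terms, and closing linear Gronwall-type inequalities with the exponential decay of $|\A|$ and $|H-h_0|$; your final differential inequality for $h_0$ coincides with the one derived in the paper. The only difference is cosmetic bookkeeping in Step 2 (eliminating $\int u$ via $\int u = -\int u^2/h_0$ rather than expanding $H^2u = Hu^2 + h_0Hu$), which leads to the same collapsed expression.
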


\begin{proof}
Notice that $h(t)=\frac{\int_{M_t}Hd\mu_t}{|M_{t}|}$. We have $|M_{t}|h(t)=\int_{M_{t}}Hd\mu_t$ and from Lemma \ref{evo}, 
\[
\partial_{t}|M_{t}|=\int_{M_{t}}H(h_{0}-H)d\mu_{t}=0.
\]
Direct computation gives
\begin{align*}
|M_{t}|\pa_{t}h(t) &=\pa_{t}\int_{M_{t}}Hd\mu_{t}=\int_{M_{t}}[|A|^{2}(H-h_{0})-H^{2}(H-h_{0})]d\mu_{t} \\
&=\int_{M_{t}}[|\A|^{2}(H-h_{0})+(\frac{1}{n}-1)H^{2}(H-h_{0})]d\mu_{t} \\
&=\int_{M_{t}}[|\A|^{2}(H-h_{0})+(\frac{1}{n}-1)H(H-h_{0})^{2}+(\frac{1}{n}-1)h_{0}H(H-h_{0})]d\mu_{t} \\
&=\int_{M_{t}}[|\A|^{2}(H-h_{0})+(\frac{1}{n}-1)H(H-h_{0})^{2}]d\mu_{t}.
\end{align*}
It follows
\[
\pa_th\leq(\eps e^{-\a t})^{\f52}\Longrightarrow h(t)\leq h(0)+\eps^{\f52}\f2{5\a}.
\]
And 
\[
|M_{t}|\pa_th\geq-(\varepsilon e^{-\a t})^{\frac{5}{2}}|M_{t}|-(\varepsilon e^{-\a t})\int_{M_{t}}Hd\mu_{t} .
\]
Thus
\[
\pa_th\geq-(\eps e^{-\a t})^{\f52}-(\eps e^{-\a t})h,
\]
and
\begin{align*}
\pa_t(he^{-\frac{\eps}{\a}e^{-\a t}})
& =e^{-\frac{\eps}{\a}e^{-\a t}}(\pa_{t}h+\eps e^{-\a t}h)\\
& \geq-(\eps e^{-\a t})^{\f52} e^{-\frac{\eps}{\a}e^{-\a t}}\\
& \geq-\eps^{\f52} e^{-\f52\a t}.
\end{align*}
We then get 
\[
h(t)e^{-\frac{\eps}{\a}e^{-\a t}}-h(0)e^{-\f{\eps}{\a}}\geq-\eps^{\frac{5}{2}}\frac2{5\a},
\]
which implies 
\[
 h(t)\geq  h(0)e^{-\frac{\eps}{\a}}-\eps^{\frac{5}{2}}\frac2{5\a}e^{\frac{\eps}{\a}}=C_{1}(h(0),\varepsilon,\a)>0.
 \]
Therefore, we have $C_1(\eps,\a,h(0))\leq h(t)\leq h(0)+\eps^{\frac{5}{2}}\frac{2}{5\a}$.
Thus 
\[
h_{0}(t)=\frac{\int_{M_t} H^{2}d\mu_t}{\int_{M_t} Hd\mu_t}\geq\frac{\int_{M_t} Hd\mu_t}{|M_{t}|}=h(t)\geq C_{1}(h(0) , \eps,\gamma)>0.
\]

Now for the fact that 
$h_0(t)\cdot\int_{M_t} Hd\mu_t=\int_{M_t} H^{2}d\mu_t$, we have
\begin{align*}
&\partial_th_{0}\cdot\int_{M_t}Hd\mu_t\\
 =&\partial_{t}\int_{M_t} H^{2}d\mu_t-h_{0}\partial_{t}\int_{M_t} Hd\mu_t \\
=&\int_{M_{t}}[2H\Delta H+2H|A|^{2}(H-h_{0})+H^{3}(h_{0}-H)-h_{0}|A|^{2}(H-h_{0})-h_{0}H^{2}(h_{0}-H)]d\mu_{t} \\
=&\int_{M_{t}}[-2|\n H|^{2}+2H|\A|^{2}(H-h_0)+(\frac{2}{n}-1)H^{3}(H-h_{0})-h_{0}|\A|^{2}(H-h_{0})\\
&\quad\quad-\frac{h_{0}}{n}H^{2}(H-h_{0})-h_{0}H^{2}(h_{0}-H)]d\mu_{t} \\
=&\int_{M_{t}}[-2|\n H|^{2}+2(H-h_{0})^{2}|\A|^{2}+2h_{0}|\A|^{2}(H-h_{0})+(\frac{2}{n}-1)H^{2}(H-h_{0})^{2}\\
&\quad\quad+(\frac{2}{n}-1)H^{2}h_{0}(H-h_{0})-h_{0}|\A|^{2}(H-h_{0})-\frac{1}{n}h_{0}H^{2}(H-h_{0})-h_{0}H^{2}(h_{0}-H)]d\mu_{t}\\
=&\int_{M_{t}}[-2|\n H|^{2}+2|\A|^{2}(H-h_{0})^{2}+h_{0}|\A|^{2}(H-h_{0})+(\frac{2}{n}-1)H^{2}(H-h_{0})^{2}\\
&\quad\quad+\frac{1}{n}h_{0}H^{2}(H-h_{0})]d\mu_{t} \\
=&\int_{M_{t}}[-2|\n H|^{2}+2|\A|^{2}(H-h_{0})^{2}+h_{0}|\A|^{2}(H-h_{0})+(\frac{2}{n}-1)H^{2}(H-h_{0})^{2}\\
&\quad\quad+\frac{1}{n}Hh_{0}(H-h_{0})^{2}]d\mu_{t} \\
\leq&\int_{M_{t}}[2|\A|^{2}(H-h_{0})^{2}+h_{0}|\A|^{2}(H-h_{0})+\frac{1}{n}Hh_{0}(H-h_{0})^{2}]d\mu_{t} \\
\leq& 2(\eps e^{-\gamma t})^{3}|M_{t}|+h_{0}(\eps e^{-\a t})^{\frac{5}{2}}|M_{t}|+\frac{1}{n}h_{0}\eps e^{-\a t}\int_{M_{t}}Hd\mu_{t}.
\end{align*}
This implies 
\begin{align*}
\pa_th_{0}& \leq\frac{2(\eps e^{-\a t})^{3}}{h}+\frac{h_{0}}{h}(\eps e^{-\a t})^{\frac{5}{2}}+\frac{1}{n}h_{0}\eps e^{-\a t} \\
&\leq\frac{2(\eps e^{-\a t})^3}{C_{1}}+\left(\frac{(\eps e^{-\a t})^{\frac52}}{C_{1}}+\frac{\eps e^{-\a t}}{n}\right)h_{0}.
\end{align*}
Let 
\[
g(t)=\frac{2\eps^{\f52} e^{-\frac{5}{2}\a t}}{5\a C_{1}}+\frac{\eps e^{-\a t}}{\a n}\leq\frac{2\eps^{\frac{5}{2}}}{5\a C_{1}}+\frac{\eps}{\a n}=g(0),
\]
then 
\[ g'(t)=-\left(\frac{(\eps e^{-\a t})^{\frac{5}{2}}}{C_1}+\frac{\eps e^{-\a t}}{n}\right). \]
We find
\[
\partial_{t}(e^{g(t)}h_{0}(t))=e^{g(t)}(\partial_{t}h_{0}+g'(t)h_{0}(t))\leq e^{g(t)}\frac{2(\eps e^{-\a t})^{3}}{C_{1}}\leq e^{g(0)}\frac{2\eps^{3}e^{-3\a t}}{C_{1}}.
\]
which implies 
\begin{gather*}
e^{g(t)}h_{0}(t)\leq e^{g(0)}h_{0}(0)+\frac{2e^{g(0)}\eps^{3}}{3\a C_{1}}=e^{g(0)}\left(h_{0}(0)+\frac{2\eps^{3}}{3\a C_{1}}\right),\\
\Rightarrow h_{0}(t)\leq e^{g(0)}\left(h_{0}(0)+\frac{2\eps^{3}}{3\a C_{1}}\right)=C_{2}(\a,n,\varepsilon,h(0),h_{0}(0)).
\end{gather*}
Therefore, we finally get
\[ H(t)\leq h_{0}(t)+|H-h_{0}|(t)\leq C_2+\eps^{\frac{1}{2}},\]
\[ H(t)\geq C_{1}-\eps^{\frac{1}{2}}.\]

\end{proof}

Now we could prove the long time existence and convergence of the flow \eqref{flow0} by an iteration argument.
We restate the theorem as follows.

\begin{thm}\label{thmr}
    Let $M_0\subset\mathbb{R}^{n+1}$ be a compact, orientable, smoothly immersed hypersurface of dimension $n\geq2$ satisfying 
    \bel{}\label{ini}
    |M_0|\leq v,\quad \abs{A}\leq\Lambda,\quad H\geq\ga,\quad \int_{M_0}|\A|^2\leq\eps,
   \qe
for any positive constants $v, \Lambda, \ga$ and sufficiently small $\eps=\eps(v, \Lambda, \ga)>0$.   
   Then the flow \eqref{flow0} with initial hypersurface $M_0$ will stay for all the time and converge exponentially fast to a round sphere. 
\end{thm}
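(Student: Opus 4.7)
\bigskip

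\noindent\textbf{Proof proposal for Theorem \ref{thmr}.}

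The plan is to run an iteration in the spirit of Li \cite{Li2009TheVM}: combine the short-time estimates of Proposition \ref{prop1.2} with the exponential decay of $|\mathring A|$ (Lemma \ref{lem1.3}), the decay of $|H-h_0|^2$ (Lemma \ref{lem1.4}), and the two-sided control of $H$ given by Lemma \ref{lem1.6} to produce, on each successive time window, estimates that are at least as strong as those we started with. Once such an a priori scheme closes, long-time existence follows from a standard continuation argument and the round-sphere convergence comes from the exponential collapse of $|\mathring A|$.

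Concretely, first I would fix the maximal existence time $T_{\max}$ of the smooth flow and the set
\[
\mathcal{S}=\{\,T\in[0,T_{\max})\,:\ |A|(t)\leq 2\Lambda,\ H(t)\geq \gamma/2\text{ on }[0,T]\,\}.
\]
By Proposition \ref{prop1.2} the set $\mathcal{S}$ is non-empty and contains some $T_0=T_0(\Lambda,\gamma,n)$. On $[\tau,T]$, with $\tau\in(0,T_0)$ a fixed small number, Proposition \ref{prop1.2} also yields $|\mathring A|(t)\leq C_1\eps^{1/2}$ and $|H-h_0|^2(t)\leq C_1\eps^{1/2}$. Treating $\tau$ as the new initial time, Lemma \ref{lem1.3} applies provided $\eps$ is small (depending on $v,\Lambda,\gamma,n$) and gives
\[
|\mathring A|(t)\leq C_1\eps^{1/2}\,e^{-\alpha(t-\tau)},\qquad \alpha=\tfrac{\gamma^2}{16n},
\]
throughout the time interval where the hypothesis $H\geq\gamma/2$ persists. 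Feeding this into Lemma \ref{lem1.4} produces a matching exponential bound on $|H-h_0|^2$, and Lemma \ref{lem1.6} then yields uniform two-sided bounds
\[
0<C_1(h(0),\alpha,\eps)-\eps^{1/2}\leq H(t)\leq C_2(n,h(0),h_0(0),\eps,\alpha)+\eps^{1/2}.
\]
Because $|A|^2=|\mathring A|^2+\tfrac{1}{n}H^2$, these bounds together with the decay of $|\mathring A|$ ensure that for $\eps$ small enough we can in fact enforce the strictly better inequalities $|A|(t)<2\Lambda$ and $H(t)>\gamma/2$ on the whole window, contradicting maximality at any finite endpoint unless $T_{\max}=\infty$.

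The genuine obstacle is precisely what the authors flag as the new feature compared with the volume-preserving flow: $\int_{M_t}(h_0-H)d\mu_t$ is not zero but only nonnegative, so $h_0$ cannot be directly pinned between $\min H$ and $\max H$. I expect the crux of the argument to be the careful bookkeeping inside Lemma \ref{lem1.6}, i.e.\ proving that the proxy $h(t)=|M_t|^{-1}\int_{M_t}H\,d\mu_t$ stays uniformly positive along the flow (exponential decay of $|\mathring A|$ and of $|H-h_0|^2$ is used in a Gronwall-type argument on $h$ and then on $h_0$). This is what ultimately underwrites the uniform positive lower bound on $H$, which in turn feeds back into Lemma \ref{lem1.3} to keep the exponential rate $\alpha$ stable under iteration.

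With all bounds in hand on $[0,\infty)$, long-time existence is automatic: $|A|$ and $H^{-1}$ are uniformly bounded and the standard parabolic regularity and continuation theorem for quasilinear flows applies. For convergence I would argue that $|\mathring A|(t)\leq C e^{-\alpha t}$, combined with Moser iteration on the evolution equation for $|\nabla^m \mathring A|^2$ and interpolation against the uniform $C^k$ bounds coming from $|A|\leq 2\Lambda$, delivers $|\nabla^m \mathring A|(t)\to 0$ exponentially for every $m$. Hence $M_t$ converges in $C^\infty$ to a smooth closed totally umbilical hypersurface in $\mathbb{R}^{n+1}$, which must be a round sphere. The preserved area $|M_t|=|M_0|$ together with Lemma \ref{lem1.6} fixes its radius, completing the proof. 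The arguments for Theorems \ref{thm1.2} and \ref{thm1.3} will follow by inserting the ambient curvature terms recorded in Lemma \ref{evo} and adjusting the lower bound on $H$ in the hyperbolic case to absorb the constant $n$ that appears in the evolution of $|\mathring A|^2$.
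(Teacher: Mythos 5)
Your overall strategy is the same as the paper's: combine Proposition \ref{prop1.2} with Lemmas \ref{lem1.3}, \ref{lem1.4} and \ref{lem1.6} and iterate over successive time windows, with Lemma \ref{lem1.6} (the two-sided control of $h$ and hence of $h_0$ and $H$) correctly identified as the crux that replaces the identity $\int_{M_t}(h-H)\,d\mu_t=0$ available for the volume-preserving flow.

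There is, however, a concrete gap in your continuation step. You define $\mathcal{S}$ by the thresholds $|A|\leq 2\Lambda$ and $H\geq\gamma/2$ and claim that for small $\eps$ the lemmas return the \emph{strictly better} bounds $|A|<2\Lambda$ and $H>\gamma/2$. They do not. The upper bound that Lemma \ref{lem1.6} delivers for $H$ is anchored to $h_0(2\tau)\leq\sqrt{n}\max|A|(2\tau)\leq 2\sqrt{n}\Lambda$, so the best one recovers is $H\leq 3\sqrt{n}\Lambda+1$; since $|A|\leq|\mathring A|+H/\sqrt{n}$, this only gives $|A|\lesssim 3\Lambda+1$, which is strictly worse than $2\Lambda$, and the lower bound on $H$ likewise degrades (to $\gamma/4$ after one application of Proposition \ref{prop1.2}, and the decay rate from $\alpha=\gamma^2/(16n)$ to $\alpha_1=\gamma^2/(64n)$). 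So the open condition in your set $\mathcal{S}$ cannot be re-established and the continuity argument as stated does not close. The paper's fix is precisely to let the constants deteriorate \emph{once}: after two preliminary steps it works with the enlarged thresholds $\Lambda_1=3\sqrt{n}\Lambda+2$, $H\geq\gamma/3$ and rate $\alpha_1$, and then proves a self-improving claim (Step 3) showing that \emph{these} bounds propagate from $[2\tau,S]$ to $[2\tau,S+T_2]$ with a fixed window length $T_2$. The key structural point your sketch glosses over is that the output of Lemma \ref{lem1.6} depends only on data at the fixed early times $\tau,2\tau$ (via $h(\tau)$, $h_0(2\tau)$), not on the evolving bounds, which is why the constants stabilize after one degradation rather than drifting under iteration. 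With the thresholds adjusted in this way, the rest of your argument (long-time existence by continuation, $C^\infty$ exponential convergence from the decay of $|\mathring A|$ and interpolation) matches the paper.
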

\begin{proof}
The proof is divided into several steps. 

\textbf{Step 1}:
Suppose the solution satisfies the conditions \eqref{ini} at the initial time. By Proposition \ref{prop1.2}, there exists some positive constant $T_1$ such that 
\[
\abs{A}\leq 2\Lambda,\quad H\geq \f\ga2,\quad \text{for}\quad t\in[0,T_1].
\]
and for some $\tau<\f{T_1}4$, there exists a constant $D_1(n, \tau, \Lambda, \ga)>0$, such that 
\[
\abs{H-h_0}^2\leq D_1\eps^{\f12},\quad |\A|\leq D_1\eps^{\f12}\quad\text{for}\quad t\in[\tau,T_1].
\]
Applying Lemma \ref{lem1.3}, for $t\in[\tau,T_1]$, we choose some $\eps\in(0,\eps_0)$ so that 
 \[
 |\A|(t)\leq e^{-\a(t-\tau)}|\A|(\tau)\leq D_1\eps^{\f12}e^{-\a(t-\tau)}\quad\text{for}\quad t\in[\tau,T_1],
 \]
 where $\a=\frac{\ga^2}{16n}$ and $D_1\eps^{\half}<1$.
By Lemma \ref{lem1.4}, 
 \[
 |H-h_0|^2(t)\leq D_2(n, \tau, 2\Lambda)\eps^{\f12}e^{-\a(t-\tau)}\quad\text{for}\quad t\in[2\tau,T_1],
 \]
 with $D_2\eps^{\half}<1$.
Set $D_3=\max\set{D_1,D_2}$, by Lemma \ref{lem1.6}, we obtain
\begin{align*}
h_{0}(t) & \leq(1+\delta)\cdot h_{0}(2\tau)+(1+\delta)\cdot\frac{4({D_3\eps^{\half}})^3}{3\a \ga}\\
 &\leq(1+\delta)\cdot\sqrt{n}\max_{\Sigma_{2\tau}}\abs{A}+(1+\delta)\cdot\frac{4({D_3\eps^{\half}})^3}{3\a \ga}\\
 & <2(1+\delta)\sqrt{n}\Lambda+(1+\delta)\cdot\frac{4({D_3\eps^{\half}})^3}{3\a \ga}\\
 & <3\sqrt{n}\Lambda+\f12,
 \end{align*}
 for $t\in [2\tau, T_1]$,
where $\delta=\exp[\frac{4(D_3\eps^{\half})^{\frac{5}{2}}}{5\a\ga }+\frac{D_3\eps^{\half}}{\a n}]-1$. We then get 
 \[
H\leq 3\sqrt{n}\Lambda+1\quad\text{for}\quad t\in[2\tau,T_1].
 \]

\textbf{Step 2}: Recall the estimate for $t=T_1-\tau$, 
\[
\abs{A}(t)\leq 2\Lambda,\quad \frac{\ga}{2}\leq H(t)\leq 3\sqrt{n}\Lambda+1,\]
\[|H-h_0|^2\leq D_2\eps^{\f12}e^{-\a(t-\tau)},\quad|\A|\leq D_1\eps^{\f12}e^{-\a(t-\tau)}.
\]
Applying Proposition \ref{prop1.2} once more for the initial time $t=T_1-\tau$ gives with $T_2=T_2(3\sqrt{n}\Lambda+2)$,
\begin{gather*}
  \abs{A}(t)\leq 4\Lambda,\quad H(t)\geq\f\ga4,\\
  |H-h_0|^2(t)\leq D_4\eps^{\f12}e^{-\a(t-\tau)},\quad|\A|(t)\leq D_4\eps^{\f12}e^{-\a(t-\tau)}, 
\end{gather*}
for $t\in[T_1,T_1+T_2]$, where $D_4=D_1(\tau, n, 2\Lambda, \frac{\ga}{2})D_3$.
We now choose $\eps$ small enough that $D_4\eps^{\half}<\eps_0(n, \frac{\ga}{8})$ and apply Lemma \ref{lem1.3} again to see that
\[
|\A|(t)\leq e^{-\a_1(t-\tau)}\max_{M_\tau}|\A|(\tau)\leq D_1\eps^{\f12}e^{-\a_1(t-\tau)}, 
\]
for $t\in[\tau,T_1+T_2]$, where $\a_1=\frac{\ga^2}{64n}<\a$.
By Lemma \ref{lem1.4}, 
 \[
 |H-h_0|^2(t)\leq D_2(n, \tau, 2\Lambda)D_1\eps^{\f12}e^{-\a_1(t-\tau)}\quad\text{for}\quad t\in[2\tau,T_1+T_2],
 \]
 By Lemma \ref{lem1.6}, we have
 \begin{align*}
h_{0}(t) & \leq(1+\delta_1)\cdot h_{0}(2\tau)+(1+\delta_1)\cdot\frac{4({D_3\eps^{\half}})^3}{3\a_1 \ga}\\
 &\leq(1+\delta_1)\cdot\sqrt{n}\abs{A}(2\tau)+(1+\delta_1)\cdot\frac{4({D_3\eps^{\half}})^3}{3\a_1 \ga}\\
 & <2(1+\delta_1)\sqrt{n}\Lambda+(1+\delta_1)\cdot\frac{4({D_3\eps^{\half}})^3}{3\a_1 \ga}\\
 & <3\sqrt{n}\Lambda+\f12,
 \end{align*} 
 for $t\in [2\tau, T_1+T_2]$.
Here $\delta_1=\exp[\frac{4(D_3\eps^{\half})^{\frac{5}{2}}}{5\a_1\ga }+\frac{D_3\eps^{\half}}{\a_1 n}]-1$.
Therefore,
\[
    H\leq h_0+\abs{H-h_0}\leq 3\sqrt{n}\Lambda+1,
\]
\[ |A|(t)\leq |\A|+|H-h_0|+h_0\leq 3\sqrt{n}\Lambda+3, \]
for $t\in[2\tau, T_1+T_2]$.
By Lemma \ref{lem1.4}, 
\[
\abs{H-h_0}^2\leq D_2(n,\tau,3\sqrt{n}\Lambda+3)D_1\eps^{\f12}e^{-\a_1(t-\tau)},\quad \text{for}\quad t\in[2\tau,T_1+T_2].
\]
Set $D_6=\max\set{D_2(n,\tau,3\sqrt{n}\Lambda+3)D_1,D_1}$ and  choose $\eps$ with $D_6\eps^{\half}<1$, then applying Lemma \ref{lem1.6}, we find 
\[
h(t)\geq h(\tau)e^{-\f{D_6\eps^{\half}}{\a_1}}-(D_6\eps^{\half})^{\f52}\f2{5\a_1}e^{\f{D_6\eps^{\half}}{\a_1}},
\]
for $t\in[T_1, T_1+T_2]$, and $H(t)\geq h(t)-\eps\geq \frac{\ga}{3}$.

\textbf{Step 3}: We claim that
\begin{claim}\label{claim}
If the estimates 
\begin{gather*}
    \f\ga3\leq H(t)\leq 3\sqrt{n}\Lambda+1,\\
    |\A|(t)\leq D_6\eps^{\f12}e^{-\a_1(t-\tau)},\\
    \abs{H-h_0}^2(t)\leq D_6\eps^{\f12}e^{-\a_1(t-\tau)}
\end{gather*}
hold for $t\in[2\tau,S]$ and $S\geq T_1+T_2$, then they also hold on $[2\tau,S+T_2]$.
\end{claim}
\begin{proof}
    First on $[2\tau,S]$, 
    \[\abs{A}(t)\leq|\A|(t)+\abs{H}(t)\leq 3\sqrt{n}\Lambda+2=\Lambda_1.\]
     By Proposition \ref{prop1.2} for $t=S-\tau$, there is $T_2(3\sqrt{n}\Lambda+2)$, such that
    \begin{gather*}
  \abs{A}(t)\leq 2\Lambda_1,\quad H(t)\geq\f\ga6,\\
  |H-h_0|^2(t)\leq D_7\eps^{\f12}e^{-\a_1(S-\tau)},\quad|\A|(t)\leq D_7\eps^{\f12}e^{-\a_1(S-\tau)},
\end{gather*}
for $t\in [S, S+T_2]$, where $D_7=D_1(n, \tau, 2\Lambda_1, \frac{\ga}{3})D_6$.
Applying Lemma \ref{lem1.6}, we choose $\eps<\eps_2$ small enough,
\[
H(t)\geq h(t)-\eps\geq h(S)e^{-\f{D_7\eps^{\half}}{\a_1}}-(D_7\eps^{\half})^{\f52}\f2{5\a_1}e^{\f{D_7 \eps^{\half}}{\a_1}} \geq \f\ga4,
\]
for $t\in[S, S+T_2]$.
By Lemma \ref{lem1.3} and \ref{lem1.4}, 
\begin{gather*}
|\A|(t)\leq e^{-\a_1(t-\tau)}\max_{M_{\tau}}|\A|(\tau)\leq D_1\eps^{\f12}e^{-\a_1(t-\tau)},\quad t\in[\tau,S+T_2]\\
\abs{H-h_0}^2(t)\leq D_2D_1\eps^{\f12}e^{-\a_1(t-\tau)}\leq D_6\eps^{\f12}e^{-\a_1(t-\tau)},\quad t\in[2\tau,S+T_2].
\end{gather*}
By Lemma \ref{lem1.6}, we conclude that
\[
H(t)\leq h_0(t)+\eps\leq 3\sqrt{n}\Lambda+1, 
\]
and therefore
\[ |A|(t)\leq \Lambda_1, \]
for $t\in[2\tau,S+T_2].$
And by choosing $\eps<<1$, from Lemma \ref{lem1.6}, we also have 
\[
H(t)\geq h(t)-\eps\geq h(\tau)e^{-\f{D_6\eps^{\half}}{\a_1}}-(D_6\eps^{\half})^{\f52}\f2{5\a_1}e^{\f{D_6\eps^{\half}}{\a_1}}\geq \f\ga3,\]
for $t\in[T_1,T_1+T_2]$.

\end{proof}

\textbf{Step 4}: By Claim \ref{claim}, the flow \eqref{flow0} has long time existence and converges exponentially to a totally umbilical hypersurface, which is the round sphere in $\mathbb{R}^{n+1}$.
  
\end{proof}

From the proof of Theorem \ref{thmr}, we observe that the main components are Proposition \ref{prop1.2}, Lemma \ref{lem1.3}, Lemma \ref{lem1.4} and Lemma \ref{lem1.6}. Thus, it suffices to prove the corresponding lemmas in $\mathbb{S}^{n+1}$ and $\mathbb{H}^{n+1}$, and then the theorem follows in a similar way.

As noted in \cite{XuHW2014spaceform}, we require the Sobolev inequality and diameter estimate for closed hypersurfaces in $\mathbb{S}^{n+1}$ and $\mathbb{H}^{n+1}$. By the evolution equations in Lemma \ref{evo}, the two cases in $\mathbb{S}^{n+1}$ and $\mathbb{H}^{n+1}$ are closely resemble those in the Euclidean space, leading us to omit the intricate details of the proofs.

\vspace{.1in}

\section{ Area preserving mean curvature flow in the asymptotically Schwarzschild spaces}

In this section, we investigate the flow \eqref{flow0} in the asymptotically Schwarzschild spaces. We will prove that the flow \eqref{flow0} exists for all the time and converges exponentially fast to a constant mean curvature surface. This serves as an alternative approach to construct the foliation of constant mean curvature surfaces obtained by Huisken-Yau using the volume preserving mean curvature flow (\ref{flow-VPMCF}) (\cite{huisken1996definition}).

\begin{thm}\label{asy}
    Let $N$ be a 3-dimensional asymptotically Schwarzschild space with metric
    \[ \bar{g}_{\alpha\beta}=\left(1+\frac{m}{2r}\right)^4\delta_{\alpha\beta}+P_{\alpha\beta}, \]
    where
    \[ |P_{\alpha\beta}|\leq C_1r^{-2},\quad|\partial^lP_{\alpha\beta}|\leq C_{l+1}r^{-l-2},\quad 1\leq l\leq 4, \]
    and $m>0$ is a constant. Set $C_0=\max(1, m, C_1, C_2, C_3, C_4, C_5)$.  Let $M_0\subset N$ be the coordinate sphere of large radius $\sigma>0$. Then there is a $\sigma_0$ depending only on $C_0$ such that for all $\sigma\geq\sigma_0$, the flow \eqref{flow0} starting from $M_0$ exists for all time and converges exponentially fast to a constant mean curvature surface.
\end{thm}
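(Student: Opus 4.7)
The plan is to follow the continuity/bootstrap strategy of Huisken-Yau, adapted to handle the non-local term $h_0$ specific to the area-preserving flow. First I would record the geometric data of the initial coordinate sphere $M_0$ of radius $\sigma$: in the Schwarzschild background one has $H=2/\sigma+O(m/\sigma^2)$, $|\A|^2=O(\sigma^{-6})$, and $M_0$ sits well inside a circular domain $\mathcal{B}_\sigma(B_1,B_2,B_3)$. Then I would introduce an admissible class of surfaces characterized by (i) remaining inside $\mathcal{B}_\sigma(B_1,B_2,B_3)$, (ii) two-sided bounds $c_1/\sigma\le H\le c_2/\sigma$, and (iii) Schwarzschild-type decay $|\A|^2\le C\sigma^{-6}$. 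Short-time existence (a variant of Proposition \ref{prop1.2}) makes the set of times for which these bounds hold nonempty and open; the heart of the argument is to show it is also closed, by proving that the bounds strictly improve.

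For the curvature estimates I would combine the evolution equation for $|\A|^2$ from Lemma \ref{evo} with the explicit expansion $\bar{Ric}=O(mr^{-3})+O(r^{-4})$ of the ambient Ricci tensor. After coupling with $H^2\sim 4/\sigma^2$, the Schwarzschild contribution produces a damping term of order $\sigma^{-2}|\A|^2$ while the inhomogeneities coming from $\bar{Ric}$ are $O(\sigma^{-6})$. Multiplying by a suitable power of $H$, integrating, and running a Moser iteration on the resulting Simons-type inequality, exactly as in Huisken-Yau, should yield a uniform pointwise bound $|\A|^2\le C\sigma^{-6}$. The gradient estimates on $\nabla\A$ and $\nabla^2\A$ then follow by standard parabolic interpolation on the hypersurface.

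The main obstacle, as highlighted in the introduction, is controlling the non-local term $h_0(t)$. In the volume-preserving case the identity $\int_{M_t}(h-H)d\mu_t\equiv 0$ provides an immediate two-sided bound on $h$; here only $\int_{M_t}(h_0-H)d\mu_t\ge 0$ holds, so a direct lower bound on $h_0$ is unavailable. Following the proxy strategy of Lemma \ref{lem1.6}, I would work instead with $h(t)=\int_{M_t}H\,d\mu_t/|M_t|$: derive an ODE for $h(t)$ from the area conservation and the evolution of $\int Hd\mu_t$, show $h(t)$ stays close to its initial value $h(0)\sim 2/\sigma$, and then use the chain $h\le h_0\le\max H$ to produce the required two-sided control $c_1/\sigma\le h_0(t)\le c_2/\sigma$. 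Keeping the resulting bound on $|h_0-H|$ small enough that the evolving surface cannot escape $\mathcal{B}_\sigma(B_1,B_2,B_3)$ while balancing the $O(m/\sigma^3)$ errors from the ambient geometry is the most delicate part of the bootstrap.

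For the exponential convergence, rather than estimating $\int(h_0-H)^2d\mu_t$ directly as in Huisken-Yau, the plan is to study the ratio $h_0(t)/h(t)$, which by Cauchy-Schwarz satisfies $h_0/h\ge 1$ with equality iff $H$ is constant on $M_t$. Differentiating this ratio along the flow, inserting the decay of $|\A|^2$ together with the interpolation $|H-h_0|\le\mathrm{diam}(M_t)\max|\nabla H|\le C\sigma\max|\nabla\A|$, should yield a differential inequality of the schematic form
\[ \frac{d}{dt}\left(\frac{h_0}{h}-1\right)\le -\frac{c}{\sigma^2}\left(\frac{h_0}{h}-1\right)+\text{lower order decay}, \]
producing exponential decay of the oscillation of $H$. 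Combined with the parabolic bootstrap for higher derivatives of $\A$ and a standard Arzel\`a-Ascoli argument, this gives smooth exponential convergence of $M_t$ to a CMC surface contained in $\mathcal{B}_\sigma(B_1,B_2,B_3)$.
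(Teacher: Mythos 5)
Your overall architecture matches the paper's: bootstrap inside the circular domain $\mathcal{B}_\sigma(B_1,B_2,B_3)$, curvature estimates following Huisken--Yau, the average $h$ as a proxy for the non-local term $h_0$, and the ratio $h_0/h$ as the quantity driving convergence. Two remarks on the first half: the position estimate keeping $M_t$ in $\mathcal{B}_\sigma$ is not just a matter of making $|h_0-H|$ small; the paper runs a maximum-principle argument at the first point where $\max_{M_t}r=\sigma+D$ is violated, where necessarily $H\le h_0$, and contradicts this using the expansion $H=\frac{2}{r_0}-\frac{4m}{r_0^2}+\frac{6m\langle\vec a,\nu_e\rangle_e}{r_0^3}+O(\sigma^{-3})$ together with the corresponding expansions of $h$ and $\oint H^2$ --- the positivity comes from the mass term $6m\langle\vec a,\nu_e\rangle_e/r_0^3$. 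Also, the radius estimate $|r_0(t)-\sigma|\le c$ here follows directly from area preservation ($|M_t|=|M_0|$), a small simplification over the volume-preserving case.

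The genuine gap is in your convergence step. Your schematic inequality
\[
\frac{d}{dt}\Bigl(\frac{h_0}{h}-1\Bigr)\le -\frac{c}{\sigma^2}\Bigl(\frac{h_0}{h}-1\Bigr)+\text{lower order}
\]
is not what the computation gives, and the claimed $\sigma^{-2}$ damping does not exist. Differentiating $h_0/h$ produces $\int_{M_t}\bigl[-2|\nabla H|^2+2(|\A|^2+\bar{Ric}(\nu,\nu))(H-h_0)^2+h_0H(H-h_0)^2\bigr]d\mu_t$ (up to the factor $h^2|M_t|$). The gradient term, via the eigenvalue estimate of Huisken--Yau (their Lemma 3.13), contributes $-\frac{4}{\sigma^2}\int(H-h)^2=-\frac{4}{\sigma^2}h(h_0-h)|M_t|$, while the reaction term $h_0H(H-h_0)^2$ contributes $+\frac{4}{\sigma^2}\int(H-h_0)^2=+\frac{4}{\sigma^2}h_0(h_0-h)|M_t|$; since $\int(H-h_0)^2=\frac{h_0}{h}\int(H-h)^2\ge\int(H-h)^2$, the net $\sigma^{-2}$ contribution is \emph{nonnegative} and survives only as the quadratic term $+\frac{4}{\sigma^2}\bigl(\frac{h_0}{h}-1\bigr)^2$. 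The actual linear damping is $-\frac{12m}{\sigma^3}\bigl(\frac{h_0}{h}-1\bigr)$, coming entirely from the mass (the ambient Ricci term $\bar{Ric}(\nu,\nu)=-2m/\sigma^3+O(\sigma^{-4})$ and the $m/\sigma^3$ corrections in $H$). Consequently the differential inequality does not close by itself: the bad quadratic term can dominate unless one first knows $\frac{h_0}{h}-1\ll m/\sigma$. The paper supplies this by observing that $\frac{d}{dt}\mathrm{Vol}(\Phi_t)=|M_0|(h_0-h)\ge0$ and the enclosed volume stays bounded, so $\int_0^\infty(h_0-h)\,dt<\infty$ and $h_0-h\to0$; only after time $t_0$ with $\frac{h_0}{h}-1\le\eps\sigma^{-2}$ does one obtain the decay $e^{-(12-\eps)m\sigma^{-3}t}$. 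Your proposal omits both the cancellation and this preliminary smallness step, and your pointwise interpolation $|H-h_0|\le\mathrm{diam}(M_t)\max|\nabla H|$ is not the mechanism that produces the decay --- the $L^2$ identities $\int(H-h)^2=h(h_0-h)|M_t|$ and $\int(H-h_0)^2=h_0(h_0-h)|M_t|$ are.
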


\begin{proof}
We prove the theorem by following the approach of Huisken-Yau. First recall the following descriptions for nearly round surfaces proved by Huisken-Yau.

\begin{prop}\label{2.1} (Proposition 2.1 of \cite{huisken1996definition})
Suppose $M$ is a hypersurface in $(N, \bar{g})$ such that $r(y)\geq \frac{1}{10}\max_M r=:r_1$ for all $y\in M$ and such that for some constants $K_1, K_2$
\[
|\A|\leq K_1r_1^{-3},\quad |\nabla\A|\leq K_2r_1^{-4}.
\]
Then there is an absolute constant $c$ such that the curvature $A^e$ of $M$ with respect to the Euclidean metric satisfies
\[
|\A^e|\leq c(K_1+C_0)r_1^{-3},\quad |\nabla^eA^e|\leq c(K_2+C_0)r_1^{-4}.
\]
provided $r_1\geq c(C_0+K_1)$. Moreover, there is a number $r_0\in {\mathbb R}$ and a vector $\vec{a}\in {\mathbb R}^3$ such that
\[
|\lambda_i^e-r_0^{-1}|\leq c(K_1
+K_2+C_0)r_1^{-3},\quad i=1,2,
\]
\[
|(y-\vec{a})-r_0\nu_{e}|\leq c(K_1
+K_2+C_0)r_1^{-1},
\]
\[
|\nu_{e}-r_0^{-1}(y-\vec{a})|\leq c(K_1
+K_2+C_0)r_1^{-2}.
\]
Here $y$ and $\nu_e$ are the position vector and the unit normal of $M$ in ${\mathbb R}^3$, respectively.
\end{prop}

For $\sigma\geq1$ and $B_1, B_2, B_3$ nonnegative numbers, we define a set $\mathcal{B}_{\sigma}(B_1, B_2, B_3)$ of round surfaces in $(N, \bar{g})$ by setting
\[ \mathcal{B}_{\sigma}:=\{ M\subset N|\sigma-B_1\leq r\leq \sigma+B_1, |\A|\leq B_2\sigma^{-3}, |\nabla\A|\leq B_3\sigma^{-4} \}. \]

The next proposition gives us the accuracy information for the mean curvature for surfaces in $\mathcal{B}_{\sigma}(B_1, B_2, B_3)$.

\begin{prop}\label{2.2} (Proposition 2.2 of \cite{huisken1996definition})
Let $M$ be a hypersurface in $\mathcal{B}_{\sigma}(B_1, B_2, B_3)$. Suppose $\sigma\geq c(B_1+B_2+C_0)$ is such that all assumptions of Proposition \ref{2.1} are satisfied
and let $r_0$ and $\vec{a}$ be as in that proposition. Then there is an absolute constant $c$ such that the mean curvature of $M$ satisfies
\[
\left|H-\frac{2}{r_0}+\frac{4m}{r_0^2}-\frac{6m\langle \vec{a}, \nu_e\rangle_e}{r_0^3}\right|\leq c(C_0^2+B_2+B_3)\sigma^{-3},
\]
provided $\sigma\geq c(B_1^2+B_2+C_0)$.
\end{prop}

We next show the cooresponding results of Proposition 3.4 and 3.5 in \cite{huisken1996definition}.

\begin{prop}\label{3.4}
    Suppose that $M_t$ is a smooth solution of the flow \eqref{flow0} which is contained in $\mathcal{B}_{\sigma}(B_1, B_2, B_3)$ for all $t\in [0, T]$. Suppose that $\sigma\geq c(C_0+B_1+B_2)$ is such that Proposition \ref{2.1} applies and $r_0(t)$ is as in that result. Then there is an absolute constant $c$ such that
    \[|r_0(t)-\sigma|\leq c(C_0+B_2+B_3) \]
    holds uniformly in $[0, T]$, provided that $\sigma\geq c(C_0+B_1+B_2)$.
\end{prop}

\begin{proof}
    At time $t=0$, we have $r_0=\sigma$.  By Proposition \ref{2.1}, we have
    \[ \big| |M_t|-|M_t|_e \big|\leq \left| \int_{M_t}(\sqrt{\text{det}g}-1)dx \right|\leq cC_0\sigma, \]
    where $|M_t|_e$ denotes the area of $M_t$ with respect to the Euclidean metric.
    Since the flow \eqref{flow0} preserves the area, there holds
    \[ |M_t|=|M_0|=4\pi\sigma^2+c\sigma. \]
    From Proposition \ref{2.1}, $|M_t|_e=4\pi r_0^2+c\sigma.$
    Therefore, 
    \[ |4\pi\sigma^2-4\pi r_0^2|\leq c(C_0+B_2+B_3)\sigma, \]
    which implies the desired result.
\end{proof}

\begin{prop}
    Suppose that the solution $M_t$ of \eqref{flow0} is contained in $\mathcal{B}_{\sigma}(B_1, B_2, B_3)$ for all $t\in [0, T]$. Then there is an absolute constant $c$ such that
    \[ \max_{M_t}r\leq \sigma+c(m^{-1}+1)(C_0^2+B_2+B_3) \]
     holds uniformly in $[0, T]$, provided that $\sigma\geq c(C_0^2+B_1^2+B_2+B_3)$.
\end{prop}

\begin{proof}
    Let $D>0$ and assume $\max_{M_t}r<\sigma+D$ is violated for the first time at $(y_0, t_0), t_0>0$.
    At that point $r(y_0, t_0)=\max_{M_t}r=\sigma+D$, $\langle y_0, \nu_e\rangle_e=r$ and
   $(h_0-H)\langle \nu, \nu_e\rangle_e\geq0$.
    From Proposition \ref{2.1}, it follows that $\langle \nu, \nu_e\rangle_e\geq\frac{1}{2}$ and $\langle \Vec{a}, \nu_e\rangle_e\geq\frac{1}{2}|\vec{a}|$ at $(y_0, t_0)$ provided $\sigma\geq c(C_0+B_2+B_3)$. 
    So we have $H\leq h_0$ at $(y_0, t_0)$, but we get from Proposition \ref{2.2} that
    \begin{equation}\label{He}
        H=\frac{2}{r_0}-\frac{4m}{r_0^2}+\frac{6m\langle \vec{a}, \nu_e\rangle_e}{r_0^3}+\mathcal{O}(\sigma^{-3}),
    \end{equation} 
    \begin{equation}\label{he}
        h=\frac{\int_{M_t}Hd\mu_t}{|M_t|}=\oint_{M_t}Hd\mu_t=\frac{2}{r_0}-\frac{4m}{r_0^2}+\frac{6m}{r_0^3}\oint_{M_t}\langle\vec{a}, \nu_e\rangle_e d\mu_t +\mathcal{O}(\sigma^{-3}), 
    \end{equation} 
    \[ \oint_{M_t}H^2d\mu_t=\frac{4}{r_0^2}-\frac{16m}{r_0^3}+\frac{16m^2+24m\oint_{M_t}\langle\vec{a}, \nu_e\rangle_ed\mu_t}{r_0^4}+\mathcal{O}(\sigma^{-4}). \]
    Thus, we get at $(y_0,t_0)$
    \begin{equation}\label{3.2}
    \begin{split}
        &h(H-h_0)=hH-\oint_{M_t}H^2d\mu_t\\
        =&\frac{12m\langle\vec{a}, \nu_e\rangle_e}{r_0^4}-\frac{12m}{r_0^4}\oint_{M_t}\langle\vec{a}, \nu_e\rangle_e+\mathcal{O}(\sigma^{-4})\\
        \geq& \frac{6m|\vec{a}|}{r_0^4}-\frac{12m}{r_0^4}\oint_{M_t}\langle\vec{a}, \nu_e\rangle_e+\mathcal{O}(\sigma^{-4})\\
        \geq& (4m|\vec{a}|-c(C_0^2+B_2+B_3))\sigma^{-4},
        \end{split}
    \end{equation} 
    provided that $\sigma\geq c(C_0^2+B_1^2+B_2+B_3)$. At $(y_0, t_0)$, in view of Proposition \ref{3.4},
     we have
     \[ |\vec{a}|\geq D-c(C_0+B_2+B_3). \]
     Combining with \eqref{3.2}, we finally get
     \[ 0\geq h(H-h_0)\geq (4mD-(4m+1)c(C_0^2+B_2+B_3))\sigma^{-4}, \]
     which yields a contradiction if $D>c(m^{-1}+1)(C_0^2+B_2+B_3)$.
\end{proof}

By a similar argument as Huisken and Yau (Lemma 3.6-Proposition 3.12 in \cite{huisken1996definition}), we can derive curvature estimates for the flow \eqref{flow0}. This implies the long-time existence and smooth convergence of the flow \eqref{flow0}. 
Finally, we will show that the solution surfaces $M_t$ converge exponentially fast to a constant mean curvature surface, highlighting the main difference between the area preserving mean curvature flow (\ref{flow0}) and the flow (\ref{flow-VPMCF}) considered by Huisken and Yau.

We can easily compute that 
\begin{equation}\label{11}
    \int_{M_t}(H-h)^2d\mu_t=h(h_0-h)|M_t|,
\end{equation} 
\begin{equation}\label{22}
    \int_{M_t}(H-h_0)^2d\mu_t=h_0(h_0-h)|M_t|,
\end{equation}
\begin{equation}\label{Ric}
    \bar{Ric}(\nu, \nu)=-\frac{2m}{\sigma^3}+c\sigma^{-4}.
\end{equation}
We can calculate
\[ \frac{d}{dt}\left(\frac{h_0}{h}\right)=\frac{h\frac{d}{dt}h_0-h_0\frac{d}{dt}h}{h^2}=\frac{\frac{d}{dt}h_0\int_{M_t}Hd\mu_t-h_0|M_t|\frac{d}{dt}h}{h^2|M_t|}. \]
Note that
\begin{equation*}
    \begin{split}
        &\frac{d}{dt}h_0\int_{M_t}Hd\mu_t-h_0|M_t|\frac{d}{dt}h\\
        =&\frac{d}{dt}\int_{M_t}H^2d\mu_t-2h_0\frac{d}{dt}\int_{M_t}Hd\mu_t\\
        =& \int_{M_t}[2H(\Delta H+(|A|^2+\bar{Ric}(\nu, \nu))(H-h_0))+H^3(h_0-H)]d\mu_t\\
        &-2h_0\int_{M_t}[(|A|^2+\bar{Ric}(\nu, \nu))(H-h_0)+H^2(h_0-H)]d\mu_t\\
        =& \int_{M_t}[2H\Delta H +2(|A|^2+\bar{Ric}(\nu, \nu))(H-h_0)^2-H^2(H-h_0)^2\\
        &\quad\quad +h_0H^2(H-h_0)]d\mu_t\\
        =& \int_{M_t}[-2|\nabla H|^2 +2(|\A|^2+\bar{Ric}(\nu, \nu))(H-h_0)^2
      +h_0H(H-h_0)^2]d\mu_t.\\
    \end{split}
\end{equation*}
Thus, by Lemma 3.13 in \cite{huisken1996definition} and \eqref{He}, \eqref{11}, \eqref{22}, \eqref{Ric}, we have
\begin{equation}\label{h0h}
    \begin{split}
        &\frac{d}{dt}\left(\frac{h_0}{h}\right)\\
        \leq& \frac{-2}{h^2|M_t|}\left( \frac{2}{\sigma^2}-\frac{4m}{\sigma^3}-cC_0B_1\sigma^{-4} \right)\int_{M_t}(H-h)^2d\mu_t\\
        &+\frac{2}{h^2|M_t|}\left(\frac{-2m}{\sigma^3}+cC_0B_1\sigma^{-4}\right)\int_{M_t}(H-h_0)^2d\mu_t\\
       & +\frac{1}{h^2|M_t|}\left( \frac{4}{\sigma^2}-\frac{16m}{\sigma^3}+-cC_0B_1\sigma^{-4} \right)\int_{M_t}(H-h_0)^2d\mu_t\\
        =& \left(\frac{h_0}{h}-1\right)\left( -\frac{4}{\sigma^2}+\frac{8m}{\sigma^3}+cC_0B_1\sigma^{-4} \right)\\
        &+\frac{h_0}{h}\left(\frac{h_0}{h}-1\right)\left(\frac{4}{\sigma^2}-\frac{20m}{\sigma^3}+cC_0B_1\sigma^{-4}\right)\\
        =& \left(\frac{h_0}{h}-1\right)\left( -\frac{12m}{\sigma^3}+cC_0B_1\sigma^{-4} \right)\\
       & +\left(\frac{h_0}{h}-1\right)^2\left(\frac{4}{\sigma^2}-\frac{20m}{\sigma^3}+cC_0B_1\sigma^{-4}\right).
    \end{split}
\end{equation}
Now notice that $M_t\subset\mathcal{B}_{\sigma}(B_1, B_2, B_3)$ for all time $t\geq0$ such that by (\ref{vol-evo})
\[ \int_0^{\infty}\int_{M_t}(h_0-H)d\mu_tdt=|M_0|\int_0^{\infty}(h_0(t)-h(t))dt\leq \text{Vol}(B_{\sigma+B_1}) \]
is bounded. Since $h_0(t)-h(t)\geq 0$, we see that $h_0(t)-h(t)$ tends to zero as $t\rightarrow\infty$. On the other hand, $h(t)=\mathcal{O}(\sigma^{-1})$ has a uniform lower bound, we see that $\frac{h_0}{h}\to 1$ as $t\rightarrow\infty$. In particular, there is $t_0$ such that 
\[ \frac{h_0}{h}-1\leq \eps\sigma^{-2}, \]
for $t>t_0$.
Therefore, from \eqref{h0h},  
\[ \frac{d}{dt}\left(\frac{h_0}{h}\right)\leq \left(\frac{h_0}{h}-1\right)\left( -\frac{(12-\eps)m}{\sigma^3} \right),  \]
which implies that
\[ \left(\frac{h_0(t)}{h(t)}-1\right)\leq \left(\frac{h_0(0)}{h(0)}-1\right)e^{-\frac{(12-\eps)m}{\sigma^3}t}.  \]
Using the fact that $h(t)=\mathcal{O}(\sigma^{-1})$ again, we obtain that 
\[ 0\leq h_0(t)-h(t)\leq ce^{-\frac{(12-\eps)m}{\sigma^3}t}  \]
for some constant $c$. Inserting this estimate into (\ref{22}) and use the fact that $h_0(t)=\mathcal{O}(\sigma^{-1})$, we finally obtain the exponential decay of the $L^2$-integral
\[ \int_{M_t}(H-h_0)^2d\mu_t\leq c'e^{-\frac{(12-\eps)m}{\sigma^3}t}  \]
for another constant $c'$. Exponential convergence in higher norms are now followed from standard interpolation inequalities, which completes
the proof of Theorem \ref{asy}.
\end{proof}



\vspace{.2in}

\bibliography{refofmix}
\bibliographystyle{alpha}

\date{November 2024}

\end{document}